\theoremstyle{plain}
\newtheorem{thm}{Theorem}
\newtheorem{prop}{Proposition}
\newtheorem{lem}{Lemma}
\def \R {\mathbb R}
\def \E {\mathbb E}
\def \bh {\mathbf{h}}
\newcommand{\ft}[1]{{#1}^*}
\begin{document}

\title{Minimax rates of convergence for Wasserstein deconvolution  with supersmooth errors in any dimension}

\author{J\'er\^ome Dedecker$^{(1)}$ and Bertrand Michel$^{(2)}$}

\maketitle

\noindent{\small
(1)  Laboratoire MAP5  UMR  CNRS 8145,  Universit\'e Paris  Descartes,
Sorbonne Paris Cit\'e,\\
(2) Laboratoire de Statistique Th\'eorique et Appliqu\'ee, Universit\'e Pierre et Marie Curie - Paris 6}

\begin{abstract}
The subject of this paper is the estimation of a probability measure on ${\mathbb R}^d$ from  data  observed with an additive noise, under the
Wasserstein metric of order $p$ (with $p\geq 1$).
We assume that the distribution of the errors is  known and belongs to a class of supersmooth distributions, and  we give optimal rates of convergence for
the Wasserstein metric of order $p$.
In particular, we show how to use  the existing lower bounds for the estimation  of the cumulative distribution function in dimension one to find
lower bounds for the Wasserstein deconvolution  in any dimension.
\end{abstract}

\medskip

{\small
\noindent  {\bf Keywords}:  Deconvolution,  Wasserstein metrics, supersmooth distributions,  minimax rates.

\noindent {\bf AMS  MSC 2010}:  62G05, 62C20.
}

\section{Introduction}

We observe $n$ random vectors $  Y_i $ in $\R^d$ sampled according to the convolution model:
\begin{equation} \label{modelConv}
  Y_i = X_i + \varepsilon_i
\end{equation}
where the random vectors $X_i = (X_{i,1},\dots,X_{i,j},\dots,X_{i,d})'$ are i.i.d. and distributed according to an unknown probability measure $\mu$.
The random vectors
$\varepsilon_i = (\varepsilon_{i,1},\dots,\varepsilon_{i,j},\dots,\varepsilon_{i,d})'$ are  i.i.d. and distributed according to a known probability
measure $\mu_{\varepsilon}$. The distribution of the observations $Y_i$ on $\R ^d $ is then the convolution $\mu \star \mu_{\varepsilon}$.
Here, we  shall assume that there exists an invertible matrix $A$ such that the coordinates of the vector $A \varepsilon_1$ are independent
(that is: the image measure of $\mu_\varepsilon$ by $A$ is the product
of its marginals).

This paper is about minimax optimal rates of convergence
for estimating the measure $\mu$ under Wasserstein metrics. For $p \geq 1$, the Wasserstein distance $W_p$ between $\mu$ and $\mu'$ is defined by:
$$W_p(\mu,\mu') = \inf_{\pi \in \Pi(\mu,\mu')} \left ( \int_{\mathbb{R}^d \times \mathbb{R}^d} \| x - y \|^p \pi(dx,dy) \right )^{\frac{1}{p}},$$
where $\Pi(\mu,\mu')$ is the set of probability measures on $\mathbb{R}^d \times \mathbb{R}^d$ with marginals $\mu$ and $\mu'$ and $p $ is
a real number  in $[1, \infty ($ (see \cite{Rachev98b} or \cite{villani2008oton}). The norm $\| .\|$ is the euclidean norm in $\R^d$ corresponding to
the inner product $<\cdot,\cdot>$.

The Wasserstein deconvolution problem is  interesting
in itself since $W_p$ are  natural distances for comparing probability measures. Indeed, contrary to the ${\mathbb L}_p$-distances between
probability densities (except for $p=1$, which coincides with the total variation distance), the distances $W_p$ are true distances between
probability distributions. Note also that many natural estimators $\hat \mu_n$
of 
$\mu$ are singular with respect to $\mu$ (think of the empirical measure
in most cases), and consequently
the total variation distance between $\hat \mu_n$ and $\mu$ is equal to $2$ for any $n$. 
This will be the case of our deconvolution estimator, if the support of $\mu$ is a submanifold in 
${\mathbb R}^d$ with dimension strictly less than $d$. 
Wasserstein metrics  appear as natural distances to evaluate the performance of such estimators.

The Wasserstein deconvolution problem is also
related to  recent results in geometric inference. Indeed, in 2011,  \cite{Chazaletal11} have defined a distance function to a probability
distribution to answer geometric
inference problems in a probabilistic setting. According to their result, the topological properties of a shape can be recovered by using the distance
to a known measure $\tilde \mu$, if $\tilde \mu$ is close enough to a measure $\mu$ concentrated on
this shape
with respect to the Wasserstein distance $W_2$. This fact motivates the study of the Wasserstein deconvolution problem, since in practice the data can
be observed
with noise.

In the paper \cite{CaillerieEtAl2011}, the authors consider a  slight modification of the classical kernel deconvolution estimator, and  they provide
some upper bounds for the rate
 of convergence of this estimator for the $W_2$ distance,  for several noise distributions. Nevertheless the question of optimal rates of convergence in the
minimax sense was left open in this previous work.  The main contribution of the present paper is to find optimal rates of convergence for a class
of supersmooth distributions, for any dimension under any Wasserstein metric $W_p$. In particular  we prove that the deconvolution estimator of $\mu$
under the $W_2$ metric
introduced in~\cite{CaillerieEtAl2011}  is minimax optimal for a class of supersmooth  distributions.

The rates of convergence for  deconvolving  a density  have been deeply studied for other metrics.  Minimax rates in the univariate context can be
found for instance in
\cite{Fan91,ButuceaTsybakov08a,ButuceaTsybakov08b} and  in the recent monograph \cite{Meister09}. The multivariate problem has also been investigated in
\cite{Tang94,ComteLacour2013}. All these contributions concern pointwise convergences or ${\mathbb L}^2$ convergences; rates of convergence for the
Wasserstein metrics have  been studied only by \cite{CaillerieEtAl2011}. In Section \ref{sec:lowerB} of the present paper, we shall see
that, in the supersmooth case, lower bounds for
the Wasserstein deconvolution problem in any dimension can be deduced from lower bounds for the deconvolution of the cumulative distribution function
(c.d.f.) in dimension one.

Another interesting related work is \cite{GenoveseEtAl2012}. In this recent paper, the authors find lower and upper bounds for the risk of estimating
a manifold in Hausdorff distance under several noise assumptions. They consider in particular the additive noise model (\ref{modelConv}) with a
standard multivariate Gaussian noise.

Before giving the main result of our paper, we  need  some notations. Let $\nu$ be a measure on ${\mathbb R}^d$ with density $g$ and let $m$ be
another measure on $\R^d$. In the following we shall denote by $m \star
g $ the density of $m \star \nu$, that is
$$
  m \star g (x)= \int_{{\mathbb R}^d} g(x- z) m(dz) \, .
$$
We also denote by $\ft{\mu}$ (respectively  $\ft{f}$) the Fourier transform of the probability measure $\mu$ (respectively of the integrable function
$f$), that is:
$$
    \ft{\mu}(x)= \int_{{\mathbb R}^d} e^{i<t,x>} \mu(dt) \quad \text{and} \quad
    \ft{f}(x)= \int_{{\mathbb R}^d} e^{i<t,x>}  f(t) dt \, .
$$
For a $d\times d$ matrix $A$ and some constants $M > 0$, $p \geq 1$ and $a >1$,  let $\mathcal D_A(M,p,a)$ be the set of measures $\mu$ on
$\R^d$ for which
\begin{equation} \label{eq:DAM}
\sup_{1 \leq j \leq d}\E_{\mu} \Big( (    1 +
 | (A X_1)_j |^{2p +a} )\prod_{1 \leq \ell \leq d,  \, \ell \neq j}   (    1 +
  |(A X_1)_\ell|^{a}  )\Big) \leq M < \infty \, .
\end{equation}
Moreover we simply use the notation  $\mathcal D(M,p, a)$ if $A = \operatorname I _d$. Note that Condition (\ref{eq:DAM}) requires at least moment of
order $2p+a$ on each coordinate. In the case where the $(A X_1)_j$'s are
independent, this condition is satisfied when the $(A X_1)_j$'s have a moment of order $2p+a$. If, for some $k_0 \in \{1,\dots d\}$ all the $(A
X_1)_j$ for $j \neq k_0$ are bounded, then one need only a moment of order $2p+a$ for $(A X_1)_{k_0}$.

Let us give the main result of our paper when $\varepsilon_1$ is a  non degenerate Gaussian random vector (by non degenerate, we mean that
its covariance matrix is not equal to zero).

\begin{thm}\label{ThG}
Assume that we observe $Y_1, \dots, Y_n$ in the multivariate convolution model
(\ref{modelConv}), where $\varepsilon_1$ is a non degenerate Gaussian random vector.
Let  $A$ be an invertible matrix such that the coordinates of $A \varepsilon_1$
are independent.
Let $M>0$, $p\geq 1$  and $a>1$. Then
\begin{enumerate}
\item
There exists a constant $C>0$  such that for any estimator $\tilde \mu_n$ of the
measure $\mu$:
$$
\liminf _{ n \rightarrow \infty} \  (\log n) ^{p / 2}   \sup_{\mu \in \mathcal D_A(M,p,a)}\,  \E_{ (\mu \star \mu_\varepsilon) ^{\otimes n }} ( W_p^p
(\tilde \mu_n, \mu) ) \geq
C .
$$
\item One can build an estimator $\hat \mu_n$  of $\mu$ such that:
$$
\sup_{n \geq 1} \sup_{\mu \in \mathcal D_A(M,p,a)} \ (\log n) ^{p / 2} \
\E_{ (\mu \star \mu_\varepsilon) ^{\otimes n }}  ( W_p^p (\hat \mu_n, \mu) ) \leq
K \, ,
$$
for some positive constant $K$.
\end{enumerate}
\end{thm}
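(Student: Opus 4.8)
The proof has two independent halves: a lower bound (part 1) and an upper bound (part 2). For the lower bound, the key idea announced in the abstract is to reduce the $d$-dimensional Wasserstein deconvolution problem to the one-dimensional problem of estimating a c.d.f. Since $W_p(\mu,\mu')^p \geq W_1(\mu,\mu')^p$ is false in general, but $W_p \geq W_1$ does hold, and more usefully the one-dimensional $W_1$ distance equals the $L^1$ distance between c.d.f.'s, I would proceed as follows. First, embed a one-dimensional deconvolution problem into the $d$-dimensional model by considering measures $\mu$ of product form $\mu = \mu^{(1)} \otimes \delta_0 \otimes \cdots \otimes \delta_0$ after the change of variables $x \mapsto A x$; since $A\varepsilon_1$ has independent coordinates, the first coordinate of $AY_1$ is a one-dimensional convolution model with the Gaussian (hence supersmooth) error $(A\varepsilon_1)_1$. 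One must check that such product measures, with $\mu^{(1)}$ ranging over a suitable class of one-dimensional distributions, lie in $\mathcal D_A(M,p)$ — this is a moment computation. Then observe that for such $\mu$, $W_p(\tilde\mu_n,\mu)$ dominates (a power of) the $W_1$ distance of the first marginal of $\tilde\mu_n$ to $\mu^{(1)}$, and the latter is the $L^1$-distance between the corresponding c.d.f.'s. Finally invoke the known lower bound of order $(\log n)^{-1/2}$ for estimating a c.d.f. in the one-dimensional supersmooth (Gaussian) deconvolution problem (Fan's rate, as in \cite{Fan91}), raised to the power $p$, to conclude $\liminf (\log n)^{p/2}\sup \E(W_p^p) \geq C$. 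The main obstacle here is making the reduction rigorous: one needs a two-point or multiple-hypothesis family in the one-dimensional problem whose members stay inside the moment class $\mathcal D_A(M,p)$ and whose c.d.f.'s are separated in $L^1$ while their convolutions with the Gaussian are statistically indistinguishable at scale $(\log n)^{-1/2}$.

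**Upper bound.** For part 2, I would build the estimator by deconvolution with a compactly supported kernel, essentially the estimator of \cite{CaillerieEtAl2011} adapted to general supersmooth errors. Work in the coordinates $Z_i = AY_i$, which split into $d$ independent one-dimensional convolution models because $A\varepsilon_1$ has independent coordinates. Let $K$ be a kernel on $\R^d$ whose Fourier transform is supported in the unit ball (a product of one-dimensional such kernels), and set $K_h(x) = h^{-d} K(x/h)$; the deconvolution estimator is $\hat f_n(x) = \frac{1}{n}\sum_i \widetilde{K_h}^{\,\vee}$ applied appropriately, i.e. the inverse Fourier transform of $\widehat{K_h}(t)\,\widehat{\mu_n}(t)/\widehat{\mu_\varepsilon}(t)$ where $\widehat{\mu_n}$ is the empirical characteristic function of the $Z_i$. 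Because the error is Gaussian, $|\widehat{\mu_\varepsilon}(t)|$ decays like $e^{-c\|t\|^2}$, so the variance blows up like $e^{c/h^2}$; choosing $h \asymp (\log n)^{-1/2}$ with a small enough constant controls this. One then pushes the (signed) density $\hat f_n$ to an actual probability measure $\hat\mu_n$ — either by the truncation/normalization trick of \cite{CaillerieEtAl2011} or by taking the projection onto the set of probability measures — without degrading the rate.

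**Controlling $W_p^p$.** The heart of the upper bound is to bound $\E(W_p^p(\hat\mu_n,\mu))$ by the estimation error of the c.d.f.-type quantities. The standard tool is the coupling/quantile bound: in each coordinate, for measures on $\R$, $W_p^p$ is controlled by an integral of $|F_{\hat\mu} - F_\mu|$ (in one dimension exactly; in $d$ dimensions via a product-coupling argument that reduces to the marginals plus the independence structure built into the construction of $\hat\mu_n$, or via the Hölder-type inequality relating $W_p$ to weighted $L^1$ distances of partial c.d.f.'s). Concretely I would write $W_p^p(\hat\mu_n,\mu) \lesssim \int (1+\|x\|)^{?}\,|F_{\hat\mu_n}(x) - F_\mu(x)|\,dx$ and take expectations, splitting the integral into a bulk region $\|x\| \leq R_n$ and tails $\|x\| > R_n$. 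On the bulk, $\E|F_{\hat\mu_n}(x) - F_\mu(x)|$ is bounded using the bias ($\|\mu\star K_h - \mu\|$ in c.d.f. terms, small because $\mu$ has enough moments so its c.d.f. is regular in the relevant weak sense — this is where the moment class $\mathcal D_A(M,p)$ enters, guaranteeing uniformly over the class that the bias times the growing weight stays $O((\log n)^{-p/2})$) plus the stochastic error ($\lesssim n^{-1/2} e^{c/h^2} = n^{-1/2} n^{c'}$, negligible with the right constant in $h$); on the tails, the moment bound $\E_\mu(\|X_1\|^{2p+2}) \leq$ const gives $\int_{\|x\|>R_n}(1+\|x\|)^? \min(F_\mu(x),1-F_\mu(x))\,dx$ small, and similarly for $\hat\mu_n$ after the truncation step. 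Choosing $R_n \asymp \sqrt{\log n}$ balances these. The delicate point — and the one I expect to cost the most work — is the \emph{uniformity} in $\mu \in \mathcal D_A(M,p)$ together with the polynomial weight $(1+\|x\|)^{2p+2}$ needed to convert a weighted $L^1$ c.d.f. bound into a genuine $W_p^p$ bound: one must verify that the moment condition, which is exactly tailored with the exponents $2p+2$ in the distinguished coordinate and $2$ in the others, suffices to kill the tail contribution at the rate $(\log n)^{-p/2}$ uniformly. This is presumably why the class $\mathcal D_A(M,p)$ is defined the way it is.
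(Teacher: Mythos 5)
Your lower-bound plan follows the same route as the paper: pass to the coordinates $AY_i$, use product test measures so that the first marginal carries a one-dimensional c.d.f.\ deconvolution problem, use $W_p\geq W_1$, Jensen, and the identity $W_1=\int|F-\tilde F|$ on $\R$, and finish with a multiple-hypothesis (cubical) family whose convolutions with the Gaussian are $\chi^2$-indistinguishable at scale $1/n$. (Your parenthetical claim that $W_p^p\geq W_1^p$ ``is false in general'' is itself false --- it follows by raising $W_p\geq W_1$ to the $p$-th power --- but this does not affect your argument.) Be aware, though, that the step you defer as ``the main obstacle'' is genuinely the bulk of the work: you cannot simply ``invoke the known lower bound'' of Fan, because the perturbation family must have uniformly bounded moments of order $2p+2$, which forces the base density $f_{0,r}(t)\propto(1+t^2)^{-r}$ and the bump $H$ to have decay exponent $r>p+3/2$; this is incompatible with the hypothesis $r<\min(1,\kappa_2-1/2)$ under which Fan's $\chi^2$ bound is proved, so the paper has to redo that computation (splitting the integral at a threshold $T_n$ chosen as a function of $b_n$) rather than cite it.

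The upper bound contains a real gap. Your control of $W_p^p(\hat\mu_n,\mu)$ by $\int(1+\|x\|)^{?}\,|F_{\hat\mu_n}(x)-F_\mu(x)|\,dx$ is only valid in dimension one; for $d>1$ the multivariate c.d.f.\ difference does not bound $W_1$, and the ``product-coupling argument that reduces to the marginals'' is unavailable because neither $\mu$ (an arbitrary element of $\mathcal D_A(M,p)$, not a product measure) nor $\hat\mu_n$ has independent coordinates --- marginals do not determine the joint law, let alone the transport cost. The paper instead uses the weighted total-variation bound $W_p^p(\mu,\nu)\leq 2^{p-1}\int\|x-x_0\|^p\,|\mu-\nu|(dx)$ (Theorem 6.15 of \cite{villani2008oton}), which splits the risk into a bias term $B(H)=O(h^p)$ (the trivial coupling $X\mapsto X+hU$ with $U\sim k^{\otimes d}$, no c.d.f.\ regularity needed) and a variance term $\int(1+\|x\|^p)\sqrt{\mathrm{Var}(\hat f_n(x))}\,dx$; the latter is handled coordinatewise by Cauchy--Schwarz and Plancherel, producing integrals of $(r_j^{(k)}(u))^2$ over $[-1/h,1/h]$ for $r_j=1/\mu_{\varepsilon,j}^*$ and $k\leq\lceil p\rceil+1$ --- whence the requirement that $k^*$ be $\lceil p\rceil$ times differentiable, a feature absent from your sketch. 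With $h\asymp(\log n)^{-1/2}$ this gives $h^p+n^{-1/2}h^{-d(2\tilde\beta+1)/2}e^{d/(\gamma_2h^2)}=O((\log n)^{-p/2})$. Your choice of bandwidth and of the positivization/normalization step are correct, but without replacing the c.d.f.\ bound by a weighted total-variation (or comparable) bound, the multivariate $W_p$ control does not go through.
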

Note that in Theorem \ref{ThG} the random vector $\varepsilon_1$
may have all its coordinates, excepts one,  equal to zero almost surely.
In other words, a Gaussian noise in one direction  leads to the
same rate of convergence as an isotropic Gaussian noise.

The paper is organized as follows. The proof of the lower bound is given in Section~\ref{sec:lowerB}. In Section~\ref{sec:upperB} we then give the
corresponding upper bound in the same context by generalizing the results of \cite{CaillerieEtAl2011} for all $p \geq 1$. We finally discuss the $W_p$
deconvolution problem for ordinary smooth case in Section~\ref{sec:Disc}. Some additional technical results are given in Appendix.

\section{Lower bounds} \label{sec:lowerB}
\subsection{Main result}

The following theorem is the main result of this section. It gives a lower bound on the rates of convergence of measure estimators in the supersmooth
case for any dimension and under any
metric $W_p$. 

\begin{thm} \label{theo:LBWp}
Let $M > 0$, $p \geq 1$ and $a>1$. Assume that we observe $Y_1, \dots, Y_n$ in the multivariate convolution model
(\ref{modelConv}). Assume that there exists $j_0 \in \{1,\dots,d\}$ such that
the coordinate  $(A \varepsilon_1)_{j_0}$ has a density $g$ with respect to
the Lebesgue measure satisfying for all $w \in \R$:
 \begin{equation} \label{Assum:supsm}
| \ft{g}(w)| (1+|w|)^ {-\tilde \beta} \exp(|w|^{\beta} / \gamma_1 )  \leq c_1
\end{equation}
for some $\beta > 0$ and some $\tilde \beta \in {\mathbb R}$. Also assume that there exist some constants $\kappa_1 \in (0,1)$ and  $\kappa_2 > 1$  such that
\begin{equation} \label{Assum:supsm+}
 P(|(A \varepsilon_1)_{j_0} - t | \leq |t|^{\kappa_1}) = O(|t|^{-\kappa_2}) \quad \textrm{as } |t| \rightarrow \infty
\end{equation}
and
 \begin{equation}  \label{eq:pkappa12}
\max \left( p + 1+ \frac a 2  \, , \,  \frac {\kappa_2}   {2 \kappa_1} + \frac 1 2  \right) < \kappa_2  .
\end{equation}
Then  there exists a constant $C>0$ such that for all estimator $\tilde
\mu_n$ of the
measure $\mu$:
$$
\liminf_{n \rightarrow \infty} \ (\log n) ^{p / \beta}   \sup_{\mu \in \mathcal D_A(M,p,a)}\,  \E_{ (\mu \star \mu_\varepsilon) ^{\otimes n }} \, W_p^p
(\tilde \mu_n, \mu) \geq
C .
$$
\end{thm}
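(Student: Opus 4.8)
The plan is to reduce the $d$-dimensional Wasserstein deconvolution problem, via the invertible map $A$, to a one-dimensional deconvolution-of-the-c.d.f.\ problem in the $j_0$-th coordinate, and then invoke the existing univariate lower bounds for supersmooth deconvolution of the c.d.f. First I would observe that $W_p(A^{-1}\nu, A^{-1}\nu') \asymp W_p(\nu,\nu')$ up to the operator norms of $A$ and $A^{-1}$, so it suffices to prove the lower bound for the image model, in which the noise $A\varepsilon_1$ has independent coordinates. I would then construct a one-parameter (or finite) family of candidate measures $\mu_\theta$ on $\R^d$ that differ only in their $j_0$-th marginal: take $\mu_\theta = \mu_0^{(j_0)}(\theta) \otimes \rho$, where $\rho$ is a fixed compactly supported product measure on the remaining $d-1$ coordinates chosen so that $\mu_\theta \in \mathcal D_A(M,p)$ for all $\theta$, and $\mu_0^{(j_0)}(\theta)$ is a perturbation of a fixed reference law by a small ``bump'' at a location $t_n\to\infty$, of the type used in the standard univariate supersmooth lower bounds (e.g.\ Fan's construction). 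The key point is that if two product measures differ only in one marginal, then $W_p^p$ between them is bounded below by (a constant times) $W_p^p$ between those marginals, which in turn is bounded below using the c.d.f.: $W_p^p(m,m') \geq c\,\big(\sup_t |F_m(t)-F_{m'}(t)|\big)^{\text{something}}$ — more precisely one uses that moving mass of order $\delta_n$ a distance of order $t_n$ costs $W_p^p \gtrsim \delta_n t_n^p$ in transport, while the induced change in the convolved density $\mu_\theta \star g$ is exponentially small in $t_n^\beta$.

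The heart of the argument is the two-point (or Assouad-type) testing bound: choose the bump amplitude $\delta_n$ and location $t_n$ so that the chi-square (or Hellinger/KL) distance between the $n$-fold product laws of the observations $Y_i$ under $\mu_0$ and under $\mu_\theta$ stays bounded, while $W_p^p(\mu_0,\mu_\theta) \gtrsim \delta_n t_n^p$ is as large as possible. Because the error density $g$ is supersmooth with $|\ft g(w)| \gtrsim |w|^{\tilde\beta} e^{-|w|^\beta/\gamma_1}$, the deconvolution ``amplification factor'' is $e^{t_n^\beta/\gamma_1}$-type, forcing $\delta_n \lesssim n^{-1/2} e^{-c t_n^\beta}$ for the test to fail; optimizing $\delta_n t_n^p$ under this constraint with $\log n \asymp t_n^\beta$, i.e.\ $t_n \asymp (\log n)^{1/\beta}$, gives the rate $(\log n)^{-p/\beta}$. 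This is where assumptions (\ref{Assum:supsm}), (\ref{Assum:supsm+}) and the technical inequality (\ref{eq:pkappa12}) enter: (\ref{Assum:supsm}) controls the Fourier decay so that the chi-square bound can be made explicit; (\ref{Assum:supsm+}) controls the tail behaviour of $g$ near the shifted point $t_n$ (ensuring the perturbed density stays a genuine density and that the likelihood ratio is well-behaved), and (\ref{eq:pkappa12}) is precisely the arithmetic condition relating $p$, $\kappa_1$, $\kappa_2$ needed for the perturbation to be admissible, i.e.\ for $\mu_\theta$ to remain in $\mathcal D_A(M,p)$ while still producing a transport cost of the claimed order.

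I would organize the proof in three steps: (i) the reduction via $A$ and the reduction to perturbing one marginal, with the elementary lemma $W_p^p(m_1\otimes\rho,\, m_2\otimes\rho)\ge W_p^p(m_1,m_2)$ (proved by projecting any coupling onto the first coordinate); (ii) the explicit construction of $\mu_0$ and $\mu_\theta$ on $\R$, verification that $\mu_\theta\otimes\rho\in\mathcal D_A(M,p)$ using (\ref{Assum:supsm+})–(\ref{eq:pkappa12}), and a lower bound $W_p^p(\mu_0,\mu_\theta)\gtrsim \delta_n t_n^p$; (iii) an upper bound on the chi-square affinity $\chi^2\big((\mu_\theta\star g)^{\otimes n},(\mu_0\star g)^{\otimes n}\big)$ using Plancherel and (\ref{Assum:supsm}), calibrated so that with $t_n=(\gamma_1\log n)^{1/\beta}$ and suitable $\delta_n$ the affinity is bounded away from $1$; then the standard two-point inequality $\inf_{\tilde\mu_n}\max_\theta \E\, W_p^p(\tilde\mu_n,\mu_\theta)\ge c\,W_p^p(\mu_0,\mu_\theta)$ finishes it. The main obstacle I anticipate is step (ii): keeping the perturbed measure inside the moment class $\mathcal D_A(M,p)$ while the bump sits at distance $t_n\to\infty$ — this is a genuine constraint because the $(2p+2)$-th moment weight in the definition of $\mathcal D_A(M,p)$ penalizes far-away mass, and it is exactly to win this trade-off that one needs $\delta_n$ to decay fast enough and that conditions (\ref{Assum:supsm+}) and (\ref{eq:pkappa12}) are imposed; the secondary technical nuisance is making the chi-square computation in step (iii) rigorous when $\tilde\beta\ne 0$, which only contributes polynomial factors absorbed into constants.
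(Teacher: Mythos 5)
Your outer reductions (conjugating by $A$ to make the noise coordinates independent, perturbing only the $j_0$-th marginal of a product measure, and bounding $W_p$ from below by the one-dimensional marginal problem) are all sound and coincide with the paper's. The gap is in the heart of the argument: your candidate measures differ by a lump of mass $\delta_n$ transported to a spatial location $t_n\to\infty$, and you claim the induced change in $\mu_\theta\star g$ is exponentially small in $t_n^\beta$. That is false. Supersmoothness of $g$ is a statement about decay of $\ft g(w)$ in the \emph{frequency} variable $w$; it damps high-frequency components of a perturbation, not spatially distant ones. A translated lump of mass $\delta_n$ has Fourier transform of size $\asymp\delta_n$ at low frequencies, so after convolution with $g$ it is still a visible lump of mass $\asymp\delta_n$ near $t_n$, and the $\chi^2$ distance between the convolved laws is of order $\delta_n^2$, not $\delta_n^2 e^{-ct_n^\beta}$. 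The test then only fails for $\delta_n\lesssim n^{-1/2}$, and since the $(2p+2)$-moment constraint caps $t_n$ polynomially in $\delta_n^{-1}$, the separation $\delta_n t_n^p$ you obtain is polynomially small in $n$ --- far weaker than the claimed $(\log n)^{-p/\beta}$. (Your own calibration betrays this: $\delta_n\lesssim n^{-1/2}e^{-ct_n^\beta}$ with $t_n^\beta\asymp\log n$ gives $\delta_n t_n^p=O(n^{-1/2-c'}(\log n)^{p/\beta})$, which does not equal $(\log n)^{-p/\beta}$.)

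The correct construction, which the paper takes from Fan's cubical method, is dual to yours: the perturbations are \emph{high-frequency} and \emph{spatially bounded}. One sets $f_\theta=f_{0,r}+\sum_{s=1}^{b_n}\theta_s H(b_n(\cdot-t_{s,n}))$ with $t_{s,n}=(s-1)/b_n\in[0,1]$ and $\ft H$ supported in $[1,2]$, so each bump lives at frequencies in $[b_n,2b_n]$ where $|\ft g|\lesssim b_n^{\tilde\beta}e^{-b_n^\beta/\gamma_1}$; this is what makes the per-coordinate $\chi^2$ of order $1/n$ when $b_n\asymp(\log n)^{1/\beta}$, and it is here (in controlling the tails of the convolved bump and the denominator $f_{0,r}\star g$) that (\ref{Assum:supsm+}) and (\ref{eq:pkappa12}) are used --- not, as you suggest, to keep the perturbed measure in the moment class, which is automatic since all bumps sit in $[0,1]$. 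The separation is then measured through $W_1(\mu,\mu')=\int|F_\mu-F_{\mu'}|$: each bump contributes $b_n^{-2}\int_0^1|H^{(-1)}|$, and an Assouad-type averaging over the $b_n$ bumps (with i.i.d.\ Bernoulli coordinates $\tilde\theta_s$ and Le Cam's two-point bound applied cell by cell) accumulates these into $\E\,W_1\gtrsim b_n^{-1}\asymp(\log n)^{-1/\beta}$. Finally $W_p^p\geq W_1^p$ and Jensen give the $(\log n)^{-p/\beta}$ bound; no direct transport computation of the form $\delta_n t_n^p$ is needed or available. As written, your scheme cannot be repaired by tuning $\delta_n$ and $t_n$; you must replace the far-away bump by oscillatory perturbations and a multiple-hypothesis argument.
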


The assumption about the random variable $(A\varepsilon_1)_{j_0}$  means that the noise is supersmooth in at
least one direction. Indeed, as shown in Section  \ref{cdf}, the lower bound for the multivariate problem can be deduced from
the lower bound for the ${\mathbb L}^1$ estimation of the c.d.f.
of $(A\varepsilon_1)_{j_0}$. If the distribution of the noise is supersmooth in several directions
then one may choose the direction with the greatest coefficient $\beta$.

The assumption (\ref{Assum:supsm+}) is classical in the deconvolution setting, see for instance \cite{Fan91,Fan92}. The technical assumption
(\ref{eq:pkappa12}) summarizes the conditions on $p$ and $\kappa_2$. The condition $\frac {\kappa_2}   { 2 \kappa_1} +
\frac 1 2  < \kappa_2 $ is also required in \cite{Fan91} and \cite{Fan92}. The additional
condition $p  + 1 + \frac a 2  < \kappa_2$ is a consequence of the moment assumption on $\mu$.

If the noise distribution has finite moment of order $p+b$ for some $b> 1 + \frac a 2 $, we can state the next lemma. This moment condition is always
satisfied under the assumptions used to prove  the upper bound  
(see Theorem \ref{upperbound}).
\begin{lem} \label{lem:Cdtions}
Assume that $ \E |(A \varepsilon_1)_{j_0} |^{p+b}  < \infty $ for some $b>1 + \frac a 2$. Then
one can find $\kappa_1 \in (0,1)$ and $\kappa_2 > 1$ such that Conditions (\ref{Assum:supsm+}) and (\ref{eq:pkappa12}) are satisfied.
\end{lem}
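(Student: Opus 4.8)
The plan is to use the polynomial tail bound on $g$ to control the small-ball probability of $(A\varepsilon_1)_{j_0}$ directly, which produces an admissible value of $\kappa_2$ as a function of $\kappa_1$, and then to push $\kappa_1$ close enough to $1$ so that the purely arithmetic condition (\ref{eq:pkappa12}) is met. Note that the lemma only concerns (\ref{Assum:supsm+}) and (\ref{eq:pkappa12}); the Fourier condition (\ref{Assum:supsm}) is a separate hypothesis and plays no role here.

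First I would fix an arbitrary $\kappa_1\in(1/2,1)$ and write, for $|t|$ large,
\[
P\big(|(A\varepsilon_1)_{j_0}-t|\le |t|^{\kappa_1}\big)=\int_{t-|t|^{\kappa_1}}^{t+|t|^{\kappa_1}} g(s)\,ds .
\]
Since $\kappa_1<1$ we have $|t|^{\kappa_1}=o(|t|)$, so for $|t|$ large enough the whole interval of integration is contained in $\{|s|\ge |t|/2\}$; on that region the hypothesis $|g(s)|=O(|s|^{-a})$ gives $|g(s)|\le C\,|t|^{-a}$ for a suitable constant $C$. Bounding the integral by (length of the interval) times (supremum of $|g|$ on it) yields
\[
P\big(|(A\varepsilon_1)_{j_0}-t|\le |t|^{\kappa_1}\big)\le 2C\,|t|^{\kappa_1-a}=O\big(|t|^{-(a-\kappa_1)}\big),
\]
so that (\ref{Assum:supsm+}) holds with $\kappa_2:=a-\kappa_1$.

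It then remains to choose $\kappa_1$ suitably. With $\kappa_2=a-\kappa_1$, $a\ge p+3$ and $\kappa_1<1$, we get $\kappa_2=a-\kappa_1>p+2\ge 3>1$, so automatically $\kappa_2>1$ and the first half of (\ref{eq:pkappa12}), namely $p+2<\kappa_2$, holds. The only remaining requirement is $\frac{\kappa_2}{2\kappa_1}+\frac12<\kappa_2$, i.e. $\frac12<(a-\kappa_1)\big(1-\tfrac{1}{2\kappa_1}\big)$. The right-hand side is continuous in $\kappa_1$ on $(1/2,1)$ and, as $\kappa_1\uparrow 1$, converges to $\frac{a-1}{2}\ge\frac{p+2}{2}\ge\frac32>\frac12$. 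Hence there is $\kappa_1<1$ close enough to $1$ for which the inequality holds, and the associated $\kappa_2=a-\kappa_1$ then satisfies all three constraints, proving the lemma. The argument has essentially no obstacle; the only point requiring a little care is that (\ref{eq:pkappa12}) couples $\kappa_1$ and $\kappa_2$, but since the small-ball estimate lets us take $\kappa_2$ as large as $a-\kappa_1$ and since $a\ge p+3$ leaves slack, a limiting argument as $\kappa_1\uparrow 1$ closes the gap.
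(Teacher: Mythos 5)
Your proof is correct and follows essentially the same route as the paper: bound the small-ball probability $P(|(A\varepsilon_1)_{j_0}-t|\le|t|^{\kappa_1})$ by integrating the polynomial tail of $g$ to get a bound of order $|t|^{\kappa_1-a}$, then choose $\kappa_1,\kappa_2$ to satisfy the arithmetic constraints. The only (cosmetic) difference is that the paper fixes explicit values ($\kappa_1=3/5$, $\kappa_2=p+3-\kappa_1$) while you let $\kappa_1\uparrow 1$ and invoke continuity; both choices work.
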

\begin{proof}

%
Since $ \E |(A \varepsilon_1)_{j_0} |^{p+b}  < \infty $  then $ P(  |(A \varepsilon_1)_{j_0} | \geq  |t| ) =  O (|t|^{-p-b})$ as $|t|$ tends to infinity. We take $\kappa_2 = p+b$ and we get $p + 1 +\frac a 2 < \kappa_2$.  For any $\kappa_1 \in (0,1)$,  for $|t|$ large enough:
\begin{align*}
 P\left(|(A \varepsilon_1)_{j_0} - t | \leq |t|^{\kappa_1} \right)   
 \leq  P\left(  | (A \varepsilon_1)_{j_0} |  \geq   | t  | - |t|^ {\kappa_1}   \right) \\
 \leq  P \left( | (A \varepsilon_1)_{j_0} | \geq \frac {|t|} 2 \right) =  O (|t|^{-\kappa_2}).
 \end{align*}
Thus (\ref{Assum:supsm+})  is satisfied for any $\kappa_1 \in (0,1)$. Finally, we choose $\kappa_1$ close enough to 1 to satisfy (\ref{eq:pkappa12}).
\end{proof}

\subsubsection{Wasserstein deconvolution and c.d.f. deconvolution}
\label{cdf}

It is well known that the Wasserstein distance $W_1$
between two measures $\mu$ and $\mu'$ on $\R$ can be computed using the cumulative distribution functions:
  Let $\mu$ and $\mu'$ be two probability measures on $\R$, then
\begin{equation*}\label{connu}
 W_1(\mu,\mu') =  \int_{\R}| F_{\mu }(x)  -  F_{\mu'}(x) | \, dx 
\end{equation*}
where $F_{\mu }$ denote the c.d.f. of $\mu$. According to this property,
lower bounds on the rates of convergence for estimating $\mu$ in the
one dimensional convolution model (\ref{modelConv})  for the
metric $W_1$ can be directly deduced from lower bounds on the rates of convergence
for the estimation of the c.d.f. of $\mu$ using the integrated
risk $\mathcal R (\hat F ) := \int_\R |F_{\mu}(t) - \hat F (t) | dt $. This last problem has been less studied than pointwise rates in the
deconvolution context  but some results can be found in the literature. For instance \cite{Fan92} gives the optimal rate of convergence in
the supersmooth case for an integrated (weighted) $L_p$ risk under similar
smoothness conditions as for the pointwise case (studied in \cite{Fan91}).
The {\it cubical method}
followed in \cite{Fan92} to compute the integrated lower bound is also detailed in \cite{Fan93}. It is based on   a multiple hypothesis
strategy, see \cite{Tsybakov09} for other examples of using multiple hypothesis schema for computing  lower bounds for integrated risks.

For $M > 0$,  $p \geq 1$ and $a>1$,  we consider the set $\mathcal C_A (M,p, a)$ of the measures $\mu$ in $\mathcal D_A(M,p,a)$ for which the coordinates of
$A X_1$ are independent. Thus, for  $\mu \in \mathcal C_A (M,p, a)$, 
\begin{equation*}
\sup_{1\leq j \leq d}\Big (\E_{\mu} (    1 +  | (A X_1)_j |^{2p+a}    )\prod_{1\leq \ell \leq d
, \, \ell \neq j} \E_{\mu} (   1 +  |(A X_1)_\ell|^{a}   )\Big) \leq M < \infty .
\end{equation*}
Moreover we simply use the notation  $\mathcal C(M,p, a)$ if $A = \operatorname I _d$.

The following theorem  gives  lower bounds for $W_1(\tilde \mu_n, \mu)$
in the $d$-dimensional case, which are derived from  lower bounds on the rates of convergence of c.d.f. estimators in $\R$.
\begin{thm} \label{theo:lbW1}
Under the same assumptions as in Theorem~\ref{theo:LBWp}, there exists $C>0$  such that for all estimator $\tilde
\mu_n$ of the
measure $\mu$:
$$
\liminf_{n \rightarrow \infty} \ (\log n) ^{1 / \beta}  \sup_{\mu \in \mathcal C_A(M,p,a)}\,  \E_{ (\mu \star \mu_\varepsilon) ^{\otimes n }} \, W_1
(\tilde \mu_n, \mu) \geq C
.
$$
\end{thm}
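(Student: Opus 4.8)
The plan is to reduce the $d$-dimensional problem, in two essentially formal steps, to a one-dimensional c.d.f.\ deconvolution problem, and then to invoke the supersmooth lower bound for that problem. First I would linearize the model through $A$. If $\tilde\mu_n$ is an estimator of $\mu$, then $\tilde\nu_n:=A_\#\tilde\mu_n$ is an estimator of $\nu:=A_\#\mu$; the correspondence $\tilde\mu_n\mapsto\tilde\nu_n$ is a bijection, and since $\|Ax-Ay\|\leq\|A\|_{\mathrm{op}}\|x-y\|$ one has $W_1(\tilde\mu_n,\mu)\geq\|A\|_{\mathrm{op}}^{-1}W_1(\tilde\nu_n,\nu)$. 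In the transformed model one observes $AY_i=AX_i+A\varepsilon_i$, where the coordinates of $A\varepsilon_i$ are independent by the choice of $A$ and those of $AX_i$ are independent because $\mu\in\mathcal C_A(M,p)$; in particular the $j_0$-th coordinate obeys a genuine one-dimensional convolution model with supersmooth noise density $g$. It thus suffices to bound from below the minimax $W_1$-risk over $\nu$ ranging in the image of $\mathcal C_A(M,p)$ under $A$, i.e.\ over product measures $\nu=\nu_1\otimes\cdots\otimes\nu_d$ with the corresponding moment bound.

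Second I would restrict that supremum to the sub-family $\nu=\nu_{j_0}\otimes\bigotimes_{\ell\neq j_0}\delta_0$, with $\nu_{j_0}$ ranging over a one-dimensional family $\mathcal F$ of laws of uniformly bounded moment of order $2p+2$, chosen so that $\nu$ lies in the admissible class (the other marginals being $\delta_0$, the cross-product terms are trivial). For such $\nu$ the coordinates $\ell\neq j_0$ of $AY_i$ equal $(A\varepsilon_i)_\ell$, have a fixed known law, and are independent of $(AY_i)_{j_0}=U_i+(A\varepsilon_i)_{j_0}$, so $\big((AY_i)_{j_0}\big)_{i\leq n}$ is sufficient for $\nu_{j_0}$. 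Combining the Rao--Blackwell argument allowed by the joint convexity of $W_1$ with the contraction $W_1\big(\tilde\nu_n,\nu_{j_0}\otimes\delta_0^{\otimes(d-1)}\big)\geq W_1\big((\mathrm{proj}_{j_0})_\#\tilde\nu_n,\nu_{j_0}\big)$ leaves a purely one-dimensional minimax problem: estimate $\nu_{j_0}\in\mathcal F$ from $n$ i.i.d.\ draws of $\nu_{j_0}\star g$ under $W_1$ loss. Finally, the identity $W_1(\nu_{j_0},\nu'_{j_0})=\int_\R|F_{\nu_{j_0}}-F_{\nu'_{j_0}}|$ recalled above, together with the fact that $\hat\nu_n\mapsto F_{\hat\nu_n}$ is one admissible way of building a c.d.f.\ estimator, reduces the claim to a lower bound of order $(\log n)^{-1/\beta}$ for the minimax integrated-$\mathbb L^1$ risk $\inf_{\hat F}\sup_{\nu_{j_0}\in\mathcal F}\E\int_\R|F_{\nu_{j_0}}-\hat F|$ in the one-dimensional deconvolution model with error density $g$.

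Third I would prove this one-dimensional lower bound by the cubical (multiple-hypothesis) method of \cite{Fan92,Fan93}: perturb a fixed smooth base density $f_0$ with more than $2p+2$ finite moments by $m_n$ disjointly supported bumps of common scale $h_n$ built from a band-limited kernel, indexed by $\theta\in\{0,1\}^{m_n}$, so that the laws $\nu_\theta$ remain in $\mathcal F$ while any two of them are at $W_1$-distance $\gtrsim h_n$. Taking $h_n\asymp(\log n)^{-1/\beta}$ makes $|\ft{g}(1/h_n)|^2$ of order $n^{-1+o(1)}$ by (\ref{Assum:supsm}), so the $\chi^2$-distances between the sampling distributions $\nu_\theta\star g$ are controlled; here (\ref{Assum:supsm+}) prevents $\nu_\theta\star g$ from decaying too fast in the denominator of the $\chi^2$, the condition $\tfrac{\kappa_2}{2\kappa_1}+\tfrac12<\kappa_2$ is precisely what absorbs the resulting polynomial factor, and $p+2<\kappa_2$ is what lets the bumps sit far enough out without breaking the $(2p+2)$-moment constraint. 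Assouad's lemma (or a Fano argument over the $2^{m_n}$ hypotheses) then gives $\liminf_n(\log n)^{1/\beta}\inf_{\hat F}\sup_{\mathcal F}\E\int|F_{\nu_{j_0}}-\hat F|\geq C'>0$, and unwinding the two reductions yields the assertion with $C=\|A\|_{\mathrm{op}}^{-1}C'$.

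The first two steps are essentially soft (linear algebra, sufficiency, convexity of $W_p$, and the $W_1$--c.d.f.\ identity). The main obstacle is the third step, where the full strength of (\ref{Assum:supsm}), (\ref{Assum:supsm+}) and (\ref{eq:pkappa12}) is consumed: one must arrange the cubical family to live inside the moment class $\mathcal C_A(M,p)$ (after the change of variables) while simultaneously meeting the Fourier/$\chi^2$ bookkeeping that keeps the $2^{m_n}$ hypotheses statistically indistinguishable for $n$ observations.
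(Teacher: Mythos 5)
Your proposal is correct and follows the same overall route as the paper: reduce via $A$ to independent error coordinates, project onto the $j_0$-th marginal (the paper's Lemma~\ref{MinorW1}), pass to the integrated c.d.f.\ risk through $W_1(\mu,\mu')=\int|F_\mu-F_{\mu'}|$, and run Fan's cubical/Assouad scheme with $b_n\asymp(\log n)^{1/\beta}$ band-limited bumps on a polynomially decaying base density, with Le Cam's inequality and a $\chi^2$ bound of order $1/n$. The one genuine (if small) divergence is in the nuisance coordinates: you take $\nu_{j_0}\otimes\delta_0^{\otimes(d-1)}$ and dispose of the other coordinates by sufficiency plus Rao--Blackwell (convexity of $W_1(\cdot,\nu)$), whereas the paper takes all non-perturbed marginals equal to $f_{0,r}\cdot d\lambda$ and integrates the corresponding factors out of the two-point likelihood minimum by Fubini; both are valid, yours is slightly softer, the paper's keeps every hypothesis absolutely continuous. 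Two inaccuracies in your third step are worth correcting, though neither is fatal to the plan. First, (\ref{Assum:supsm+}) does not control the denominator of the $\chi^2$: the lower bound $f_{0,r}\star g(t)\gtrsim|t|^{-2r}$ holds for \emph{any} noise law (Lemma~\ref{Lemma51Fan91}); (\ref{Assum:supsm+}) is what gives the tail decay $|t|^{-\kappa_2}$ of the \emph{numerator}, i.e.\ of the convolved bump (Lemma~\ref{Lemma52Fan91}). Second, $p+2<\kappa_2$ has nothing to do with bumps ``sitting far out'' --- all $b_n$ bumps live in $[0,1]$; it is what makes the window $\max(p+3/2,\kappa_2/(2\kappa_1))<r<\kappa_2-1/2$ in (\ref{eq:rpkappa12}) nonempty, reconciling the decay $r>p+3/2$ needed for the $(2p+2)$-moment bound with the constraint $r<\kappa_2-1/2$ needed in the tail of the $\chi^2$; this is precisely why the paper must modify Fan's argument, which assumes $r<\min(1,\kappa_2-1/2)$.
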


Theorem~\ref{theo:LBWp}  is a corollary of Theorem~\ref{theo:lbW1} because
\begin{enumerate}
\item $\mathcal C_A(M,p,a)$ is a subset of $\mathcal D_A (M,p,a)$.
\item For any $p\geq 1$, $ \E_{ (\mu \star \mu_\varepsilon) ^{\otimes n }} \, W_1^p
(\tilde \mu_n, \mu) \geq \left(\E_{ (\mu \star \mu_\varepsilon) ^{\otimes n }} \, W_1
(\tilde \mu_n, \mu)\right)^p$.
\item $W_1$ is the smallest among all the
Wasserstein distances: for any $p \geq 1 $ and any measures $\mu$ and $\mu'$ on $\R^d$:
$  W_p(\mu,\mu')  \geq  W_1(\mu,\mu')$.
\end{enumerate}

\subsection{Proof of Theorem \ref{theo:lbW1}}

Since the works of Le Cam, it is well known that rates of convergence of estimators on some probably measure space $\mathcal P$ can be lower bounded
by introducing some convenient  finite subset of $\mathcal P$  whose elements are close enough for the total variation distance or for the Hellinger
distance. In the deconvolution setting, $\chi^2$ distance are preferable to these last metrics. Here, the following definition of the
$\chi^2$ distance will be
sufficient: for
two positive densities  $h_1$ and $h_2$  with respect to  the Lebesgue measure on $\R^d$,
 the $\chi ^2$
distance
between $h_1$ and $h_2$ is defined by
$$ \chi^2(h_1,h_2)  = \int_{\R^d} \frac{\left\{(h_1(x) -  h_2 (x)\right\}^2 }{h_1(x)} d x.$$

The main arguments for proving Theorem~\ref{theo:lbW1} come from \cite{Fan91,Fan92,Fan93}. However
some modifications are necessary to compute the lower bounds under the moment assumption $\mathcal C_A (M,p,a)$. Furthermore, we note that Theorem~1
in \cite{Fan93} cannot be directly applied in this multivariate context.

Without loss of generality, we take $j_0 = 1$.
We shall first prove Theorem~\ref{theo:lbW1} in the case where $\varepsilon_1$
has independent coordinates.

\subsubsection{Errors with independent coordinates}\label{eic}

In this section, we observe $Y_1, \dots, Y_n$ in the multivariate convolution model (\ref{modelConv}) and we assume that the random variables
$(\varepsilon_{1,j})_{1\leq j \leq d}$ are independent. This means that $A = \operatorname{I}_d$ and
that $\varepsilon_1$ has the distribution $\mu_\varepsilon= \mu_{\varepsilon,1} \otimes \mu_{\varepsilon,2} \otimes \dots \otimes
\mu_{\varepsilon,d}$.

\medskip

\noindent {\bf Definition of a finite family in $\mathcal C (M,p,a)$}.
 Let us introduce a finite class of probability measures in $\mathcal C (M,p,a)$ which are absolutely continuous with respect
to the Lebesgue measure $\lambda_d$. First, we define some densities
\begin{equation} \label{eq:f0}
 f_{0,r}(t) := C_r (1+ t^2)^{-r}
\end{equation}
with some $r >  0 $ such that
 \begin{equation}  \label{eq:rpkappa12}
\max \left( p + \frac 1 2 +\frac a 2 \, , \, \frac 1 2 \frac {\kappa_2}   {\kappa_1}   \right) <  r < \kappa_2 - \frac 1 2 .
\end{equation}
Note that this is possible according to (\ref{eq:pkappa12}). For such a $r$,  $f_{0,r}$ has a finite $(2p+a)$-th moment.

Next, let $ b_n$ be the sequence
\begin{equation} \label{eq:defbn}
b_n := \Big[\Big(\frac 1 \eta \log n \Big)^{1/ \beta}\Big ]\vee 1 \, ,
\end{equation}
where $[\cdot]$ is the integer part,
and $\eta =  \left(1 - \frac{2r}{2\kappa_2 -1} \right) / \gamma$. Note that $b_n$ is correctly defined in this way since  $
\kappa_2 - \frac 1 2 > r$. For any $\theta \in
\{0,1\} ^{b_n}$, let
\begin{equation} \label{eq:ftheta}
f_{\theta} (t) = f_{0,r}(t) + C \sum _{s=1} ^{b_n} \theta_s  H \left(b_n ( t - t_{s,n}) \right), \quad t \in \R,
\end{equation}
where $C$ is a positive constant and $t_{s,n} = (s-1) / b_n$. The function $H$ is a bounded function whose integral on the line is $0$.
Moreover, we
may choose a function $H$ such that (see for instance \cite{Fan91} or \cite{Fan93}):
\begin{description}
\item (A1) $\int_{-\infty}^{+\infty} H (t)\,  dt  = 0$ and $ \int_ 0 ^1  |H^{(-1)}(t)| \, dt >0 $,
\item (A2) $|H(t)| \leq c (1 + t^2) ^{-r}$,
\item (A3) $\ft{H}(z) = 0 $ outside $[1,2]$
\end{description}
where $H^{(-1)}(t) : = \int_{-\infty} ^t H(u)  \, d u $ is a primitive of $H$.

Using (A2) and Lemma~\ref{Lemma1FanTruong93} of Appendix \ref{A}, we choose $C >0$ small enough in such a way that
$f_{\theta}$ is a density on
$\R$. Note that by replacing $H$ by $H/C$ in the following, we finally can take $C = 1$ in (\ref{eq:ftheta}). Using (A2), Condition (\ref{eq:rpkappa12}) and
Lemma~\ref{Lemma1FanTruong93} again, we can find 
some $M$ large enough such that
for all $\theta \in
\{0,1\} ^{b_n}$:
\begin{equation} \label{FninCmbB6}
 \int_\R  \left( 1 + |t|^a  \vee |t| ^{2p+a} \right) f_{\theta}(t) \, d t \leq  M^{1/ d } .
\end{equation}
We finally use these univariate densities $f_\theta$ to define a finite family of probability measures on $\R^d$
which is included in $\mathcal C(M,p,a)$. For $\theta \in \{0,1\}
^{b_n}$, let us define the probability measure on $\R^d$:
\begin{equation} \label{eq:mutheta}
\mu_{\theta} := \left(f_{\theta} \cdot d \lambda \right) \otimes \left( f_{0,r} \cdot d \lambda \right) \otimes \dots \otimes \left( f_{0,r} \cdot d
\lambda \right)  .
\end{equation}
For any $j \in \{1,\dots,d \}$, according to (\ref{FninCmbB6}) :
\begin{equation*}
\Big(\E_{\mu_{\theta} } (    1 +    |X_{1,j}|^{2p+a}    )\prod_{2\leq \ell \leq d} \E_{\mu_{\theta} } (   1 +  |X_{1,\ell}|^{a}   ) \Big)
  \leq  M
\end{equation*}
and thus $\mu_{\theta} \in \mathcal C(M,p,a)$.

\medskip

\noindent{\bf Lower bound.}
Let $\tilde \mu _n $ be an estimator of $\mu$ and let $(\tilde \mu _n)_1$ be the marginal distribution of $\tilde \mu _n $ on the first coordinate
(conditionally to the sample $Y_1, \dots, Y_n$). According to Lemma~\ref{MinorW1} of Appendix \ref{B}:
\begin{align*}
\sup _{\mu \in \mathcal C(M,p,a)} \E _{  (\mu \star \mu_{\varepsilon}) ^{\otimes n }}W_1 \left(\mu , \tilde \mu _n  \right)
& \geq  \sup _{\theta \in \{ 0,1\}^n }\E _{  (\mu_\theta \star \mu_{\varepsilon}) ^{\otimes n }} W_1 \left(\mu_\theta , \, \tilde
\mu _n  \right) \\
& \geq  \sup _{\theta \in \{ 0,1\}^n } \E _{  (\mu_\theta \star \mu_{\varepsilon}) ^{\otimes n }} W_1 \left( f_\theta  \cdot
d\lambda \, , \, (\tilde \mu _n )_1 \right) \\
& \geq  \inf_{\hat f _n}  \sup _{\theta \in \{ 0,1\}^n } \E _{  (\mu_\theta \star \mu_{\varepsilon}) ^{\otimes n }} W_1 \left( f_\theta  \cdot
d\lambda \,
, \, \hat f _n \right)
\end{align*}
where the infimum of the last line is taken over all the probability measure estimators of $ f_\theta  \cdot
d\lambda$.

Following \cite{Fan93} (see also the proof of Theorem 2.14 in \cite{Meister09}), we now introduce  a random vector $\tilde \theta$ whose components $\tilde
\theta_s$ are i.i.d. Bernoulli random variables $\tilde \theta_1,\ldots,\tilde \theta_{b_n}$ such that $P(\tilde \theta_s = 1) = \frac 1 2$. The
density $f_{\tilde \theta}$ is thus a random density taking its values in the set of densities defined by (\ref{eq:ftheta}). Let $\E$ be the
expectation according to the law of ${\tilde \theta}$. For any
probability estimator $\hat f_n  $:
\begin{align}
\sup _{\mu \in \mathcal C(M,p,a)} \E _{  (\mu \star \mu_{\varepsilon}) ^{\otimes n }} W_1 \left(\mu , \tilde \mu _n  \right)
& \geq  \E   \: \E _{  (\mu_{\tilde \theta} \star \mu_{\varepsilon}) ^{\otimes n }} \left[ W_1 \left(f_{\tilde \theta}  \cdot d\lambda \, , \, \hat
f_n
 \right)  \right] \notag \\
& \geq   \int_\R  \E   \: \E _{  (\mu_{\tilde \theta} \star \mu_{\varepsilon}) ^{\otimes n }} \left( |F_{\tilde \theta}(t)  - \hat F_n(t)  |  \right)
\, dt \label{W1F}
\end{align}
 where $\hat F$ and $F_\theta$ are the c.d.f. of the distributions $\hat f _n $ and $f _\theta \cdot d \lambda$. For $\theta \in
\{0,1\}^{b_n}$ and $s \in \{1,\dots, b_n\}$, let us define
$$f_{\theta,s,0} := f_{(\theta_1, \dots,\theta_{s-1},0 ,\theta_{s+1}, \dots,\theta_{b_n})} \quad \textrm{ and } f_{\theta,s,1} := f_{(\theta_1,
\dots,\theta_{s-1},1,\theta_{s+1}, \dots,\theta_{b_n})} $$
and the corresponding probability measures $\mu_{\theta,s,0}$ and $\mu_{\theta,s,1}$ on $\R^d$ defined by (\ref{eq:mutheta}) for $f_{\theta} =
f_{\theta,s,0}$ or $ f_{\theta,s,1}$. Let $\bar h_{\theta,s,0} $ and $\bar h_{\theta,s,1} $ be the densities of  $\mu_{\theta,s,0} \star
\mu_{\varepsilon}$ and $ \mu_{\theta,s,1} \star \mu_{\varepsilon}$  for the Lebesgue measure on $\R^d$. Since the margins of $ \mu_\theta$  and
$\mu_\varepsilon$ are independent, for any $y_i = ( y_{i,1}, \dots, y _{i,j} , \dots , y_{i,d})  \in \R^d$ ($u=0$ or $1$), we have:
\begin{equation} \label{eq:hbardec}
\bar h_{\theta,s,u}(y_i) =  h_{\theta,s,u}(y_{i,1})  \prod _{j=2, \dots , d }   f_{0,r} \star \mu_{\varepsilon,j} (y_{i,j})
\end{equation}
where $h_{\theta,s,u} = f _{\theta,s,u} \star g$.
Let $F_{\theta,s,0}$ and $F_{\theta,s,1}$ be the c.d.f of $f_{\theta,s,0} $ and $f_{\theta,s,1}$. For $t \in [t_{s,n},t_{s+1,n}]$ where $s$ in
$\{1,\dots, b_n\}$, by conditioning by $\tilde \theta_s$, we find that
\begin{multline*}  \label{Ftomin}
 \E   \: \E _{  (\mu_{\tilde \theta} \star \mu_{\varepsilon}) ^{\otimes n }} \left( |F_{\tilde \theta}(t)  - \hat F_n(t)  |  \right)
= \\ \frac 1 2 \E   \left[ \: \E _{ \bar  h_{\tilde \theta,s,0} ^{\otimes n } } \left( |F_{\tilde \theta,s,0}(t)  - \hat F_n(t)  |  \right) \: +   \:
\E _{ \bar h_{\tilde \theta,s,1}  ^{\otimes n } } \left( |F_{\tilde \theta,s,1}(t)  - \hat F_n(t)  |  \right) \right] \, .
\end{multline*}
Hence
\begin{multline*}
\E   \: \E _{  (\mu_{\tilde \theta} \star \mu_{\varepsilon}) ^{\otimes n }} \left( |F_{\tilde \theta}(t)  - \hat F_n(t)  |  \right)
 \geq \frac 1 2 \E \int_{\R^d} \dots  \int_{\R^d}  \left\{  |F_{\tilde \theta,s,0}(t)  - \hat F_n(t)  | + |F_{\tilde \theta,s,1}(t)  - \hat F_n(t)   | \right\} \\
 \min \left( \prod_{i=1}^n  \bar h_{\tilde \theta,s,0}(y_i),   \prod_{i=1}^n \bar h_{\tilde \theta,s,1}(y_i)  \right)  d y_1 \dots d y_n \, ,
 \end{multline*}
 and consequently, according to  (\ref{eq:hbardec}),
 \begin{multline*}
\E   \: \E _{  (\mu_{\tilde \theta} \star \mu_{\varepsilon}) ^{\otimes n }} \left( |F_{\tilde \theta}(t)  - \hat F_n(t)  |  \right)\geq \frac 1 2 \E \int_{\R^d} \dots  \int_{\R^d} |F_{\tilde \theta,s,0}(t)  - F_{\tilde \theta,s,1}(t)   | \\ \min \left( \prod_{i=1}^n h_{\tilde
\theta,s,0}(y_{i,1}),  \prod_{i=1}^n  h_{\tilde \theta,s,1}(y_{i,1})  \right)   \left\{ \prod _{j=2} ^d f_{0,r} \star \mu_{\varepsilon,j}
(y_{i,j}) \right\} d y_1 \dots d y_n \, .
\end{multline*}
By using Fubini, it follows that
\begin{multline*}
\E   \: \E _{  (\mu_{\tilde \theta} \star \mu_{\varepsilon}) ^{\otimes n }} \left( |F_{\tilde \theta}(t)  - \hat F_n(t)  |  \right)
\geq \\\frac 1 2 \E\int_{\R} \dots  \int_{\R}  |F_{\tilde \theta,s,0}(t)  - F_{\tilde \theta,s,1}(t)   |  \min \left( \prod_{i=1}^n h_{\tilde
\theta,s,0}(y_{i,1}),  \prod_{i=1}^n  h_{\tilde \theta,s,1}(y_{i,1})  \right)   dy_{1,1} \dots d y_{n,1}\, .
\end{multline*}
Note that for any $\theta  \in \{0,1\} ^{b_n}$, $|F_{\tilde \theta,s,0}(t)  - F_{\tilde \theta,s,1}(t)   | = b_n^{-1}  \left| H ^{(-1)}  \left (b_n (t - t_{s,n})\right) \right| $, thus
\begin{multline}
\E \: \E_{  (\mu_{\tilde \theta} \star \mu_{\varepsilon}) ^{\otimes n }}
\left( |F_{\tilde \theta}(t)  - \hat F_n(t)  | \right) \geq \\
\ \frac {\left| H ^{(-1)} \left(b_n (t - t_{s,n})\right) \right| } {2 b_n }   \E   \, \int_{\R^n}   \min \left( \prod_{i=1}^n h_{\tilde
\theta,s,0}(y_{i,1}),  \prod_{i=1}^n  h_{\tilde \theta,s,1}(y_{i,1})  \right)   d y_{1,1} \dots d y_{n,1}.
\end{multline}
According to Le Cam's Lemma (see Lemma~\ref{lem:LeCam} of  Appendix \ref{B}), for any $\theta \in \{0,1\}^{b_n}$:
\begin{multline} \label{minchi2}
\int_{\R^n} \min \left( \prod_{i=1}^n  h_{\theta,s,0}(y_{i,1}),  \prod_{i=1}^n h_{ \theta,s,1}(y_{i,1})   \right) d y_{1,1} \dots d y_{n,1}   \\
\geq \frac 1 2   \left[\int_{\R^n}    \left\{  \prod_{i=1}^n  h_{\theta,s,0}(y_{i,1}) \right\}^{\frac 1 2 }\left\{  \prod_{i=1}^n h_{
\theta,s,1}(y_{i,1}) \right\} ^{\frac 1 2 }dy_{1,1} \dots d y_{n,1}\right]^2 \\
\geq \frac 1 2   \left[\int_{\R}    \sqrt{ h_{\theta,s,0}(y_{1,1}) \, h_{ \theta,s,1}(y_{1,1}) } d y_{1,1}\right]^{2 n }  \\
 \geq \frac 1 2   \left[ 1 - \frac 1 2 \chi ^2 \left(  h_{\theta,s,0} \, , \,  h_{ \theta,s,1}\right) \right]^{2 n }
\end{multline}
where we have used Lemma~\ref{lem:lbchi2} of Appendix \ref{B} for the last inequality.  Assume for the moment that there exists a constant $c >0$ such that for any $\theta \in \{0,1\}^{b_n}$:
\begin{equation}  \label{eq:chi2h}
\chi ^2 \left(  h_{\theta,s,0}  \, , \, h_{ \theta,s,1} \right) \leq \frac c n.
\end{equation}
Then, using (\ref{W1F}), (\ref{Ftomin}), (\ref{minchi2}) and (\ref{eq:chi2h}), we find that there exists a constant $C >0$ such that
$$
\sup _{\mu \in \mathcal C(M,p, a)} \E _{  (\mu \star \mu_{\varepsilon}) ^{\otimes n }}W_1 \left(\mu , \tilde \mu _n  \right)
 \geq  \frac C { b_n }  \sum_{s = 1}^{b_n}  \int_{t_{s,n}} ^{t_{s+1,n}} \left| H ^{(-1)}  \left(b_n (t - t_{s,n})\right) \right|   \, dt
   \geq  \frac C {b_n }  \int_{0} ^{1} \left|  H ^{(-1)}  (u)  \right| \, d  u \,  .
$$
Take $b_n$ as  in (\ref{eq:defbn}) and the theorem is thus proved (for $A =\operatorname{I}_d$) since the last term  is positive according to (A1).

\vskip 0.5cm

\noindent {\bf Proof of (\ref{eq:chi2h}).}
Let $C$ be a positive constant which may vary from line to line.
 We follow \cite{Fan92} to show that (\ref{eq:chi2h}) is valid for $b_n$ chosen as in (\ref{eq:defbn}).
 Recall that we have chosen the function $H$ such that,
by Lemma~\ref{Lemma1FanTruong93} of Appendix \ref{A},
$f_{\theta} \geq C f_{0,r}$.
 Thus,
 \begin{align*}
\chi ^2 \left(  h_{\theta,s,0} \, , \, h_{ \theta,s,1} \right)
&\leq \int_{-\infty} ^{+\infty}  \frac{ \left\{\int_{-\infty} ^{+\infty}   H
\left[b_n(t-u-t_{s,n})\right] g(u)\, d u \right\}^2} {f_{\theta,s,0} \star g (t)} d t \notag \\
&\leq C  \int_{-\infty} ^{+\infty}  \frac{ \left\{ \int_{-\infty} ^{+\infty}  H
\left[b_n(t-u-t_{s,n})\right] g(u)\, d u \right\}^2} {f_{0,r} \star g (t) } d t \notag  \\
&\leq C  \int_{-\infty} ^{+\infty}  \frac{ \left\{ \int_{-\infty} ^{+\infty}  H
\left[b_n(t'-u)\right] g(u)\, d u \right\}^2} {\int_{-\infty} ^{+\infty} f_{0,r}(t' + t_{s,n}-u)  g (u) \, du} d t'\notag .
\end{align*}
Moreover, there exists a positive constant  $C $ such that for any $t \in \R$ and any $s \in
\{1,\dots,b_n\}$, $ f_{0,r}(t + t_{s,n}) \geq C f_{0,r}(t)$.  Then,
 \begin{align}
\chi ^2 \left(  h_{\theta,s,0} \, , \, h_{ \theta,s,1} \right)
&\leq C     \int_{-\infty} ^{+\infty}  \frac{ \left\{ \int_{-\infty} ^{+\infty}  H
\left[b_n(t'-u)\right] g(u)\, d u \right\}^2} {\int_{-\infty} ^{+\infty} f_{0,r}(t'  -u)   g (u) \, du } d t'\notag  \\
&\leq C  b_n^{-1}   \int_{-\infty} ^{+\infty}  \frac{ \left\{ \int_{-\infty} ^{+\infty}  H
(v-y) g(y/b_n)\, d y  / b_n\right\}^2} {  f_{0,r} \star g (v/b_n)} d v . \label{reducekhi2}
\end{align}
The right side of (\ref{reducekhi2}) is typically the kind of  $\chi ^2$ divergence that is upper bounded in the proof of Theorem~4 in
\cite{Fan91} for computing pointwise rates of convergence. However, a slight modification of the  proof of Fan is necessary since we cannot assume
here that $r < \min(1,\kappa_2-0.5)$ (because $r > p + (1+a)/2$). It is shown in the proof of Theorem~4 in \cite{Fan91} that
\begin{equation} \label{eq:intH}
 \int_{-\infty} ^{+\infty}    \left\{ \int_{-\infty} ^{+\infty}  H (v-y) g(y/b_n)\, d y  / b_n \right\}^2  \, d v = O \left( b_n ^{2 \tilde \beta}
\exp( - 2 b_n ^\beta  / \gamma) \right) .
\end{equation}
According to Lemma~\ref{Lemma51Fan91} of Appendix \ref{A}, there exist $t_0 >0$, $C_{1} >0$ and $C_{2} >0$ such that for any $t \in \R$:
\begin{equation} \label{eq:consLemma51}
f_{0,r} \star g (t) \geq C_{1} \mathds{1}_{|t| \leq t_0} +   \frac{C_{2}}{ t ^{2r} } \mathds{1}_{|t| > t_0}
\end{equation}
Note that we can apply Lemma~\ref{Lemma52Fan91} of appendix \ref{A} since $r$ satisfies (\ref{eq:rpkappa12}). Then, using (\ref{eq:intH}),
(\ref{eq:consLemma51}) and Lemma~\ref{Lemma52Fan91} of Appendix \ref{A}, for $T > t_0$  we have:
\begin{align*}
  & \int_{-\infty} ^{+\infty}  \frac{ \left\{ \int_{-\infty} ^{+\infty}  H (v-y) g(y/b_n)\, d y  / b_n\right\}^2} {  f_{0,r} \star g (v/b_n)} d v   \\
 &  =  \int_{|v |/ b_n   \leq T}  \frac{ \left\{ \int_{-\infty} ^{+\infty}  H (v-y) g(y/b_n)\, d y  / b_n\right\}^2} {  f_{0,r} \star g (v/b_n)} d v
    + \int_{|v |/ b_n > T}  \frac{ \left\{ \int_{-\infty} ^{+\infty}  H (v-y) g(y/b_n)\, d y  / b_n\right\}^2} {  f_{0,r} \star g (v/b_n)} d v \\
& \leq (C_{1} \wedge C_{2} T ^{-2 r}) ^{-1} \int_{-\infty} ^{+\infty}    \left\{ \int_{-\infty} ^{+\infty}  H (v-y) g(y/b_n)\, d y  / b_n
\right\}^2 \, d v
+ C_r  \int_{|v |/ b_n > T}  \frac{    (|v|/b_n) ^{-2 \kappa_2} }  {   (|v|/b_n)^{-2r} } d v \\
& \leq       O \left(T ^{2 r}  b_n ^{2 \tilde \beta} \exp( -
2 b_n ^\beta  / \gamma) \right)
+   O \left(  b_n^{  2 (r -\kappa_2)}  T  ^{  2 (r -\kappa_2) +1 } \right)
\end{align*}
for $T$ large enough. By taking $T = T_n = b_n^{\frac{2  r -2 \kappa_2 - 2 \tilde \beta}{2 \kappa_2-1} } \exp \left(\frac {2 b_n^\beta }{\gamma (
2\kappa_2-1)}\right)$ in this bound and
according to (\ref{reducekhi2}), we find that for $n$ large enough:
\begin{align*}
\chi^2  (  h_{\theta,s,0} \, , \, h_{ \theta,s,1}  )
& =  O \left(  b_n^{ 2 \tilde \beta + 4 r \frac{r -\kappa_2 -\tilde \beta}{2 \kappa_2-1}   } \exp \left\{- \frac {2 b_n^\beta
}{\gamma}
\left[1 -\frac{2r}{2\kappa_2 -1}  \right] \right\} \right) \label{Okhi2} \notag \\
& =  O  \left( \exp(- \eta b_n ^{\beta})\right) = O \left( \frac 1 n \right)
\end{align*}
for $b_n$ defined by (\ref{eq:defbn}).

\subsubsection{The general case}

We now assume, as in the introduction, that there exists an invertible matrix $A$ such that the coordinates of the vector $A \varepsilon_1$ are
independent. Let $\mu \in \mathcal C_A(M,p,a)$ and let $\hat \mu _n$ be an estimator of the probability measure $\mu$. Let $\mu^{A}$  and $\hat \mu_n
^{A}$ be the image measures of $\mu$ and $\hat \mu$ by $A$. Then,
\begin{align*}
  W_1(\hat \mu ^{A}_n , \mu^{A} ) &=
  \min_{\pi \in \Pi(  \hat \mu _n ^{A} , \mu^{A} )} \int_{\R^d \times \R^d }  \|x-y\| \pi(dx,dy) \\
  &=\min_{ \tau \in \Pi(\hat \mu_n, \mu)} \int_{\R^d \times \R^d } \|Ax-Ay\| \, \tau(dx,dy) \\
  & \leq \|A\| \, W_1 \left( \hat \mu_n  , \mu   \right) \, ,
\end{align*}
where $\|A\|=\sup_{\|x\|=1} \|Ax\|$.
Consequently
$
W_1 \left( \hat \mu_n  , \mu   \right) \geq \|A\|^{-1} W_1(\hat \mu ^{A}_n , \mu^{A} )
$.

The image measure of $\mu \star \mu_\varepsilon$ by $A$ is equal to $\mu^{A} \star \mu_\varepsilon ^{A}$, where $ \mu_\varepsilon ^{A}$ is the image measure
 of  $\mu_\varepsilon$ by $A$. Moreover, the probability measure estimator $\hat \mu^A_n$ can be written
$\hat \mu ^A_n=  m (Z_1, \dots, Z_n)$ where $Z_i=A Y_i$ and  $m$ is a measurable function from $(\R^d) ^n$  into the set of  probability measures on $\R^d$. Thus,
$$   \E _{ (\mu \star \mu_\varepsilon) ^{\otimes n }} \,  W_1 \left(  \hat \mu ^{A}, \mu^{A}   \right)
 =   \E _{ (\mu^{A} \star \mu_\varepsilon^{A} ) ^{\otimes n }} \,
   W_1 \left( m (Z_1, \dots, Z_n)   ,  \mu^{A}   \right)   .
$$
Since $ \mu \in \mathcal C_{A}  (M,p,a) \Leftrightarrow   \mu^{A} \in \mathcal C (M,p,a)$, we obtain that
\begin{equation}\label{lb1}
\sup_{\mu \in \mathcal C_A (M,p,a)} \,  \E _{ (\mu \star \mu_\varepsilon) ^{\otimes n }} \,
W_1(\hat \mu_n, \mu)
  \geq \|A\|^{-1}  \sup_{\mu^A \in \mathcal C (M,p,a)}\, \E _{ (\mu^{A} \star \mu_\varepsilon^{A} ) ^{\otimes n }} \,     W_1  \left( m (Z_1, \dots,
Z_n)
\, , \, \mu^{A}   \right) \, . \\
\end{equation}
Note that, in the model $Z_i=AX_i + A\varepsilon_i$, the error $\eta=A \varepsilon$
has independent coordinates  and satisfies the assumptions of Theorem \ref{theo:LBWp}
for $A = \operatorname{I}_d$.

We now apply the lower bound obtained in Section \ref{eic}, which gives that there exists a positive constant $C$
such that
\begin{equation}\label{lb2}
\liminf_{ n \rightarrow \infty} \ (\log n) ^{1 / \beta}
 \sup_{\nu \in \mathcal C (M,p,a)}\, \E _{ (\nu \star \mu_\eta ) ^{\otimes n }} \,     W_1 \left( m (Z_1, \dots, Z_n)  \, , \, \nu   \right) \geq C
.
\end{equation}
The result follows from (\ref{lb2}) and (\ref{lb1}).


\section{Upper bounds} \label{sec:upperB}
In this section, we generalize the results of \cite{CaillerieEtAl2011}
by proving an upper bound on the rates of convergence for the estimation of the probability $\mu$ under any metric  $W_p$.

\subsection{Errors with independent coordinates}\label{DE}

In this section, we assume that the random variables $(\varepsilon_{1,j})_{1\leq j \leq d}$ are independent, which means that $\varepsilon_1$ has the distribution
$\mu_\varepsilon= \mu_{\varepsilon,1} \otimes \dots \otimes \mu_{\varepsilon,d}$.

Let $p \in [1, \infty[$ and denote by $\lceil p \rceil$ the smallest integer
greater than $p$.
We first  define a kernel $k$ whose Fourier transform  is smooth enough and compactly supported over $[-1,1]$.
 Such  kernels can be defined by considering powers of the sinc function. More precisely,
let
\begin{equation*}\label{thekernel}
   k (x)=c_p
  \left\{ \frac{(2 \lceil p/2 \rceil +2)\sin \frac x {2 \lceil p/2 \rceil +2} }{x}\right \}^{2 \lceil p/2 \rceil +2} \, .
\end{equation*}
where $c_p $ is such that $\int k(x) d x = 1$.
The kernel $k$ is a symmetric density, and  $k^*$ is supported over $[-1,1]$. Moreover
$k^*$  is $\lceil p \rceil$ times differentiable with Lipschitz $\lceil p \rceil$-th derivative.  For any $j \in \{1, \cdots , d \}$
and any $h_j>0$, let
$$
  \tilde k_{j,h_j}(x)= \frac{1}{2 \pi} \int e^{iux} \frac{k^*(u)}{\mu_j^*(u/h_j)} du \, .
$$
A preliminary estimator $\hat f_n$ is given by
\begin{equation}\label{fnhatbis}
  \hat f_n(x_1, \ldots, x_d)= \frac{1}{n} \sum_{i=1}^n \prod_{j=1 \ldots d} \frac 1 h_j \tilde{k}_{j,h_j}
  \Big(\frac{x_j-Y_{i,j}}{h_j}\Big)\, .
\end{equation}
The estimator (\ref{fnhatbis}) is the multivariate version of the standard
deconvolution kernel density estimator which was first introduced in \cite{CarrollHall88}. This estimator has been the
subject of many works in the one dimensional case, but only few authors have studied the multidimensional deconvolution problem, see
 \cite{Tang94}, \cite{ComteLacour2013} and \cite{CaillerieEtAl2011}.

 The estimator $\hat f_n$ is not necessarily a density, since it has no reason
to be non negative. Since our estimator has to be a probability measure, we
define
 $$
   \hat g_n(x)= \alpha_n \hat f_n^+(x), \quad  \text{where} \quad
   \alpha_n=\frac{1}{\int_{{\mathbb R}^d} \hat f_n^+(x) dx} \quad \text{and} \quad
   \hat f_n^+=\max \{0, \hat f_n\}  \, .
 $$
 The estimator $\hat \mu_n$ of $\mu$ is then the probability measure with density $\hat g_n$.

The next theorem gives the rates of convergence of the estimator $\hat \mu_n$
under some assumptions on the derivatives of the functions
 $r_j :=  1 / {\mu_{\varepsilon,j}^*}$.

\begin{thm} \label{upperbound} Let $M >0$, $p\geq 1$ and $a>1$.
 Assume that we observe a $n$-sample $Y_1\dots,Y_n$ in the multivariate convolution model (\ref{modelConv}) and
that  $\E | \varepsilon_j|^{2p+a} < \infty $ for all $j \in \{1,\dots,d\}$. Also assume that
there
exists  $\beta>0$, $\tilde \beta \geq 0$, $\gamma_2 > 0$ and $c_2 >0$ such
that for every $j  \in \{1,\dots,d\}$, every $\ell  \in \{0,1 \dots,\lceil p \rceil   +1\}$ and every $t\in \R$:
\begin{equation}\label{CdtRj}
\left|  r_j ^{(\ell)}(t) \right| \leq c_2 (1+ |t|^{\tilde \beta}) \exp\left(  |t|^\beta  / \gamma_2\right).
\end{equation}
Taking $h_1 = \dots = h_d= (4d/(\gamma_2 \log (n))^{1/\beta}$, there exists a positive constant $C$
such that
\begin{equation*}\label{BorneSupW1}
\sup_{\mu \in \mathcal D (M,p,a)}\,  \E_{ (\mu \star \mu_\varepsilon) ^{\otimes n }} \left( W_p ^p \left(\mu, \hat \mu_n \right) \right)  \leq C \left(
\log n \right) ^{- \frac
p \beta} \, .
\end{equation*}
\end{thm}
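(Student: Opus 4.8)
The plan is to control $W_p^p(\mu,\hat\mu_n)$ by comparing both measures to an intermediate smoothed object and using the triangle inequality for $W_p$, then bounding each piece by a (weighted) $L^p$-type distance between densities. First I would recall the standard fact that if two probability measures $\nu_1,\nu_2$ on $\R^d$ have densities $g_1,g_2$, then for any weight exponent $q>p$ one has a bound of the shape
\begin{equation*}
W_p^p(\nu_1,\nu_2) \leq C \Big(\int_{\R^d}(1+\|x\|^q)|g_1(x)-g_2(x)|\,dx\Big) \vee \Big(\int_{\R^d}\|x\|^p|g_1(x)-g_2(x)|\,dx\Big)^{?},
\end{equation*}
more precisely the clean route is: since $\hat\mu_n$ has density $\hat g_n$ and the natural comparison is with $\mu$ which need not have a density, I would instead bound $W_p(\mu,\hat\mu_n) \leq W_p(\mu,\mu\star K_{h}) + W_p(\mu\star K_h,\,\hat f_n\text{-object}) + W_p(\cdot,\hat\mu_n)$, where $K_h$ is the product kernel $\prod_j h_j^{-1}k(\cdot/h_j)$. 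The first term $W_p(\mu,\mu\star K_h)$ is a pure approximation term: since $\mu\star K_h$ is the law of $X + hU$ with $U$ having density $\prod k$ and all moments up to order $2p+2$ finite (because $k$ is a power of sinc of high enough order), a coupling argument gives $W_p^p(\mu,\mu\star K_h) \leq h^p \E\|U\|^p = O(h^p) = O((\log n)^{-p/\beta})$, which is exactly the target rate; here $h_j = (4d/(\gamma_2\log n))^{1/\beta}$.

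The second, stochastic, term is the core. Writing $\hat f_n$ as in (\ref{fnhatbis}), its expectation is $\E\hat f_n = \mu\star K_h\star\mu_\varepsilon\star(\text{deconvolution correction})$ — by the usual Fourier computation $\E\hat f_n = (\mu\star\mu_\varepsilon)\star \widetilde K_h$ convolved against the inverse characteristic function collapses to $\mu$ convolved with the kernel whose Fourier transform is $k^*(h\cdot)$, i.e. $\E\hat f_n = \mu\star K_h$. So I must control the fluctuation $\hat f_n - \E\hat f_n$, and then pass from $\hat f_n$ (a signed function) to $\hat g_n = \alpha_n \hat f_n^+$. For the passage to the truncated/renormalized version I would use the elementary inequalities $\int|\hat g_n - \hat f_n| \leq 2\int (\hat f_n - \E\hat f_n)^- \,+\, |\alpha_n-1|\cdot(\dots)$ together with $|\alpha_n - 1| \leq \int|\hat f_n - \E\hat f_n|$, so everything reduces to weighted $L^1$ control of $\hat f_n - \E\hat f_n$ against the weight $1+\|x\|^{q}$ for a suitable $q \in (p, 2p+2)$ (the moment budget in $\mathcal D(M,p)$ caps $q$ at $2p+2$). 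Then I would bound $\E\int(1+\|x\|^q)|\hat f_n(x)-\E\hat f_n(x)|\,dx$ by Cauchy–Schwarz and a bias–variance estimate: $\E|\hat f_n(x)-\E\hat f_n(x)|^2 \leq n^{-1}\E|\prod_j h_j^{-1}\widetilde k_{j,h_j}((x_j-Y_{1,j})/h_j)|^2$, and integrating the variance against the weight produces $n^{-1}\prod_j \|r_j(\cdot/h_j)k^*\|^2_{\text{appropriate Sobolev-type norm}}$, which under (\ref{CdtRj}) is $O(n^{-1}\exp(2d\, h^{-\beta}/\gamma_2)\cdot\text{poly}(h^{-1}))$. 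The choice $h^{-\beta} = (\gamma_2/(4d))\log n$ makes $\exp(2d\,h^{-\beta}/\gamma_2) = n^{1/2}$, so the variance contribution is $O(n^{-1/2}\,\text{poly}(\log n))$, and taking the square root and multiplying by $\sqrt n$ from Cauchy–Schwarz over the $L^1$-integral one gets $\E\int(1+\|x\|^q)|\hat f_n - \E\hat f_n| = O(n^{-1/4}\text{poly}(\log n)) = o((\log n)^{-p/\beta})$. Hence this term is negligible compared to the bias term.

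Finally I would assemble: the weighted $L^1$ control, via the measure-comparison lemma (the same type of estimate used implicitly with the moment class $\mathcal D(M,p)$), yields $\E\, W_p^p(\mu\star K_h,\hat\mu_n) = o((\log n)^{-p/\beta})$, and combining with the approximation bound $W_p^p(\mu,\mu\star K_h) = O((\log n)^{-p/\beta})$ through $W_p^p(\mu,\hat\mu_n) \leq 2^{p-1}(W_p^p(\mu,\mu\star K_h)+W_p^p(\mu\star K_h,\hat\mu_n))$ gives the claimed uniform-in-$\mu$ bound by a constant times $(\log n)^{-p/\beta}$. The main obstacle I anticipate is the careful bookkeeping of the weighted integrals: one must verify that integrating the squared deconvolution kernel against $1+\|x\|^q$ still only costs polynomial factors in $h^{-1}$ (not extra exponential factors), which requires that the derivatives of $r_j$ up to order $\lceil p\rceil+1$ — exactly the derivatives appearing after $q$-fold integration by parts to move the weight $x^q$ onto $k^*(u)/\mu_j^*(u/h_j)$ — obey the supersmooth bound (\ref{CdtRj}); this is precisely why the hypothesis is stated for $\ell \le \lceil p\rceil + 1$. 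Getting the moment exponent $q$ to simultaneously exceed $p$ (needed for the $W_p$-vs-weighted-$L^1$ comparison) and stay within the $2p+2$ moment budget of $\mathcal D(M,p)$ is the other point requiring care.
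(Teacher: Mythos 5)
Your proposal follows essentially the same route as the paper: the triangle inequality through the smoothed measure $\mu \star K_h$ together with the weighted total-variation bound on $W_p^p$ (Villani's Theorem 6.15, as packaged in Proposition 1 of \cite{CaillerieEtAl2011}) reproduces the paper's bias--variance decomposition, and your treatment of the stochastic term (Cauchy--Schwarz, Plancherel to convert weighted $L^2$ norms of $\tilde k_{j,h_j}$ into integrals of $r_j^{(\ell)}$ over $[-1/h,1/h]$, the moment budget $2p+2$ of $\mathcal D(M,p)$, and the bandwidth choice making $\exp(d/(\gamma_2 h^\beta))=n^{1/4}$) matches the paper's Proposition \ref{bound} and its use. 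The argument is correct in substance and identifies the right role of the hypothesis on the derivatives up to order $\lceil p\rceil+1$.
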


\subsubsection{\bf Proof of Theorem \ref{upperbound}}
Let $\bh =(h_1, h_2, \ldots, h_d)$.
We follow the proof of Proposition 2 in \cite{CaillerieEtAl2011}. First we have the bias-variance
decomposition
$$
  \E_{ (\mu \star \mu_\varepsilon) ^{\otimes n }} (W^p_p(\hat \mu_n, \mu))\leq
  2^{p-1} B(\bh) +
  2^{2(p-1)} \int_{{\mathbb R}^d} (2^{p-1}C(\bh)+\|x\|^p)\sqrt{\mathrm{Var}(\hat f_n(x))}dx \, ,
$$
where
$$
B(\bh)=  \int \|\bh^tx \|^p K(x) dx \quad \text{and}
\quad C(\bh)= B(\bh)+ \int \|x\|^p \mu(dx) \, .
$$
The proof of this inequality is the same as that of Proposition 1 in \cite{CaillerieEtAl2011},
by using Theorem 6.15 in \cite{villani2008oton}.

Note that  $B(\bh)$ is such that
$B(\bh) \leq d^{p-1} \beta ( h_1^p + \dots + h_d^p)$, with $\beta= \int |u|^p k(u) du$.
To ensure the consistency of the estimator, the bias term $B(\bh)$ has to tend to zero as
$n$ tends to infinity. Without loss of generality, we assume in the following that
$\bh$ is such that $B(\bh)\leq 1$.  Hence, the variance term
$$
V_n=2^{2(p-1)} \int_{{\mathbb R}^d} (2^{p-1}C(\bh)+\|x\|^p)\sqrt{\mathrm{Var}(\hat f_n(x))}dx
$$
is such that
$$
V_n \leq C\int_{{\mathbb R}^d} \Big(1+\sum_{j=1}^d |x_j|^{ p }
\Big)\sqrt{\mathrm{Var}(\hat f_n(x_1,\ldots, x_n))} \ dx_1\ldots dx_d
$$
for some positive constant $C$.
Now
$$
\sqrt{\mathrm{Var}(\hat f_n(x_1,\ldots, x_n))} \leq \frac {1} {\sqrt n}
 \sqrt{ \E_{ (\mu \star \mu_\varepsilon) ^{\otimes n }} \left[ \left\{ \prod_{j=1}^d \frac{1}{h_j}\tilde k_{j,h_j}
\Big(\frac{x_j-Y_{1,j}}{h_j}\Big) \right\} ^2 \right]} \, .
$$
Using that $\mu \in \mathcal D(M,p,a)$ and applying Cauchy-Schwarz's inequality $d$-times, we obtain
that
\begin{multline*}
\int_{{\mathbb R}^d} \sqrt{\mathrm{Var}(\hat f_n(x_1,\ldots, x_n))} \ dx_1\ldots dx_d \\
 \leq \frac{D_1}{\sqrt n}\sqrt{
  \E_{ (\mu \star \mu_\varepsilon) ^{\otimes n }}  \Big(\prod_{j=1}^d\int  (1\vee | x _j |^a)
\Big(\frac{1}{h_j}\tilde k_{j,h_j}
\Big(\frac{x_j- | Y_{1,j}}{h_j}\Big)\Big)^2 dx_j}\Big)\\
\leq  \frac{D_2}{\sqrt n}\sqrt{  \E_{ (\mu \star \mu_\varepsilon) ^{\otimes n }}
  \Big(\prod_{j=1}^d   (1 \vee |Y_{1,j}|^a)\Big) \prod_{j=1}^d \int(1 \vee |u_j|^a h_j^a)
\frac{1}{h_j}(\tilde k_{j,h_j}(u_j))^2 d u_j}
\end{multline*}
where  $D_1$ and $D_2$ are positive constants depending on $a$ and $d$.
Now, by independence of $X_1$ and $\varepsilon_1$, and by independence of the coordinates of
$\varepsilon_1$, we find that
$$
\E_{ \mu \star \mu_\varepsilon }
  \Big(\prod_{j=1}^d   (1 \vee |Y_{1,j} |^a)\Big)\leq 2^d  
  \E_{ \mu }
  \Big(\prod_{j=1}^d (1\vee |X_{1, j}|^a) \Big)
  \prod_{j=1}^d (1\vee \E_{\mu_\varepsilon }(|\varepsilon_{1, j}|^a)).
$$
Without loss of generality, assume that $a \in (1,2]$.
Using that $\E_{\mu_\varepsilon }(|\varepsilon_{1, j}|^a)  < \infty $ and that  $\mu$ satisfies (\ref{eq:DAM}), it follows that
\begin{multline*}
\int_{{\mathbb R}^d} \sqrt{\mathrm{Var}(\hat f_n(x))} \ dx
 \leq 
\frac{A_0}{\sqrt n}\sqrt{
  \prod_{j=1}^d  \int \! (1 \vee |u_j|^a h_j^a)
  \frac{1}{h_j}(\tilde k_{j,h_j}(u_j))^2 d u_j } \\
  \leq 
\frac{A_0}{\sqrt n}\sqrt{
  \prod_{j=1}^d  \int \! (1 + |u_j|^2 h_j^2)
  \frac{1}{h_j}(\tilde k_{j,h_j}(u_j))^2 d u_j } .
\end{multline*}
In the same way, using again that  $\mu \in \mathcal D(M,p,a)$ and that  $\E | \varepsilon_\ell|^{2p+a} < \infty $, we obtain that
\begin{multline*}\label{M6}
\int_{{\mathbb R}^d} |x_\ell|^{p }\sqrt{\mathrm{Var}(\hat f_n(x_1,\ldots, x_n))}
\ dx_1\ldots dx_d
\\
\leq 
\frac{A_\ell}
{\sqrt n}
\sqrt{
\int (1\vee|u_\ell|^{2 p  +a} h_\ell^{2 p  +a}) \frac{1}{h_\ell}(\tilde k_{\ell,h_\ell}(u_\ell))^2 d u_\ell
 \prod_{ j\neq \ell}  \int(1 \vee |u_j|^a h_j^a)
\frac{1}{h_j}(\tilde k_{j,h_j}(u_j))^2 d u_j }  \\
\leq
\frac{A_\ell}
{\sqrt n}
\sqrt{
\int (1+|u_\ell|^{2 p  +2} h_\ell^{2 p  +2}) \frac{1}{h_\ell}(\tilde k_{\ell,h_\ell}(u_\ell))^2 d u_\ell
 \prod_{ j\neq \ell}  \int(1 + |u_j|^2 h_j^2)
\frac{1}{h_j}(\tilde k_{j,h_j}(u_j))^2 d u_j } .
\end{multline*}

Starting from these computations, one can prove the following Proposition.

\begin{prop}\label{bound} Let
$(h_1, \ldots, h_d) \in [0,1]^d$. The following upper bound holds
$$
\E_{ (\mu \star \mu_\varepsilon) ^{\otimes n }} (W_p^p(\hat \mu_n, \mu)) \leq (2d)^{p-1}\beta (h_1^p+ \dots + h_d^p)
+ \frac{L}{\sqrt n}\left(\prod_{j=1}^d I_j(h_j) + \sum_{\ell=1}^d J_\ell(h_\ell)
\Big(\prod_{j=1, j\neq \ell}^d I_j(h_j)\Big)\right)
$$
where $L$ is some positive constant  and
\begin{eqnarray*}
I_j(h)  & \leq &   \sqrt{\int_{-1/h}^{1/h} (r_j (u))^2 + (r'_j (u))^2 du}
\, ,\\
J_j(h)  & \leq & \sqrt{\int_{-1/h}^{1/h} (r_j (u))^2 + (r_j^{(\lceil p \rceil+1)} (u))^2 du} \\
& &
+ \sum_{k=1}^{\lceil p \rceil}  h^{\lceil p \rceil+1-k}\sqrt{\int_{-1/h}^{1/h}   (r^{(k)}_j (u))^2 du} \, .
\end{eqnarray*}
\end{prop}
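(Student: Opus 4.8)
The plan is to assemble the estimates gathered just above into the two summands of the stated bound. The bias part is immediate: the displayed estimate $B(H)\le d^{p-1}\beta(h_1^p+\dots+h_d^p)$ together with the factor $2^{p-1}$ in the bias--variance decomposition gives $2^{p-1}B(H)\le(2d)^{p-1}\beta(h_1^p+\dots+h_d^p)$, the first summand. (The reduction of the variance term to $V_n\le C\int_{\R^d}(1+\sum_j|x_j|^p)\sqrt{\mathrm{Var}(\hat f_n(x))}\,dx$ carried out above only needs $h_j\le 1$, since then $C(H)=B(H)+\int\|x\|^p\mu(dx)$ stays bounded by a constant depending on $M,d,p$, using that $\mu\in\mathcal D(M,p)$ controls $\E_\mu\|X_1\|^p$.) Splitting that integral into the term carrying the constant and the $d$ terms carrying $|x_\ell|^p$, and inserting the two displayed Cauchy--Schwarz bounds, one obtains
$$V_n\le\frac{L}{\sqrt n}\Bigl(\prod_{j=1}^d\sqrt{E_j(h_j)}+\sum_{\ell=1}^d\sqrt{F_\ell(h_\ell)}\prod_{j=1,\,j\neq\ell}^d\sqrt{E_j(h_j)}\Bigr),$$
where $E_j(h)=\int_\R(1+u^2h^2)\,\frac1h\,(\tilde k_{j,h}(u))^2\,du$ and $F_\ell(h)=\int_\R(1+|u|^{2p+2}h^{2p+2})\,\frac1h\,(\tilde k_{\ell,h}(u))^2\,du$. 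Hence the whole proposition reduces to the two pointwise inequalities $E_j(h)\le C\,I_j(h)^2$ and $F_\ell(h)\le C\,J_\ell(h)^2$.

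To establish these I would work on the Fourier side. By construction the Fourier transform of $\tilde k_{j,h}$ is $\xi\mapsto k^*(\xi)/\mu_{\varepsilon,j}^*(\xi/h)=k^*(\xi)\,r_j(\xi/h)$ (up to a sign in the argument), a function supported in $[-1,1]$; moreover $k^*$ is $\lceil p\rceil$ times differentiable with Lipschitz $\lceil p\rceil$-th derivative, so $k^*,(k^*)',\dots,(k^*)^{(\lceil p\rceil+1)}$ are all bounded and supported in $[-1,1]$ (the last one in the a.e. sense). Integrating by parts $m$ times and using that the Lipschitz function $(k^*)^{(\lceil p\rceil)}$ is absolutely continuous, one checks, for every integer $0\le m\le\lceil p\rceil+1$, both that $u^m\tilde k_{j,h}(u)$ belongs to $L^2(\R)$ and that, by Parseval's formula,
$$\int_\R u^{2m}\,(\tilde k_{j,h}(u))^2\,du=\frac1{2\pi}\int_{-1}^{1}\Bigl|\partial_\xi^m\bigl(k^*(\xi)\,r_j(\xi/h)\bigr)\Bigr|^2\,d\xi.$$
Expanding the derivative by the Leibniz rule, bounding the (bounded, compactly supported) derivatives of $k^*$ by constants and changing variables $v=\xi/h$, the right-hand side is at most $C\sum_{k=0}^{m}h^{\,1-2k}\int_{-1/h}^{1/h}(r_j^{(k)}(v))^2\,dv$.

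It then remains to combine the appropriate values of $m$. In $E_j(h)$ only $m=0$ and $m=1$ occur, and the estimate above, together with $h\le1$ to absorb the extra powers of $h$, gives directly $E_j(h)\le C\bigl(\int_{-1/h}^{1/h}(r_j)^2+\int_{-1/h}^{1/h}(r_j')^2\bigr)=C\,I_j(h)^2$. In $F_\ell(h)$ the only point that is not entirely routine is the non-integer exponent $2p+2$; I would dispose of it by the elementary inequality $(|u|h)^{2p+2}\le 1+(|u|h)^{2\lceil p\rceil+2}$ (valid because $0\le 2p+2\le 2\lceil p\rceil+2$), which reduces $F_\ell(h)$ to the cases $m=0$ and $m=\lceil p\rceil+1$; carrying out that reduction and again using $h\le1$ yields
$$F_\ell(h)\le C\Bigl(\int_{-1/h}^{1/h}(r_\ell)^2+\int_{-1/h}^{1/h}(r_\ell^{(\lceil p\rceil+1)})^2+\sum_{k=1}^{\lceil p\rceil}h^{2(\lceil p\rceil+1-k)}\int_{-1/h}^{1/h}(r_\ell^{(k)})^2\Bigr)\le C\,J_\ell(h)^2,$$
the last step because $(\sum_i a_i)^2\ge\sum_i a_i^2$ for nonnegative $a_i$.

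The step I expect to need the most care is the Parseval identity at order $m=\lceil p\rceil+1$. Since $k^*$ is only of class $C^{\lceil p\rceil,1}$, $\tilde k_{j,h}(u)$ decays merely like $|u|^{-(\lceil p\rceil+1)}$, so $u^{\lceil p\rceil+1}\tilde k_{j,h}$ is bounded but a priori not integrable; the identity (and the $L^2$ membership) must therefore be obtained from the integration-by-parts argument itself --- using the absolute continuity of $(k^*)^{(\lceil p\rceil)}$ so that the boundary terms vanish and the remaining integral is the inverse Fourier transform of an $L^1\cap L^2$ function --- rather than from the classical $L^1$ differentiation rule for Fourier transforms. Once $E_j(h)\le C\,I_j(h)^2$ and $F_\ell(h)\le C\,J_\ell(h)^2$ are proved, substituting into the bound for $V_n$ and adding the bias term gives the proposition.
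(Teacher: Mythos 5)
Your proposal is correct and follows essentially the same route as the paper: the bias term is handled identically, the variance term is reduced via the displayed Cauchy--Schwarz bounds to the two weighted $L^2$ norms of $\tilde k_{j,h}$, and these are estimated exactly as in the paper by writing $(-iu)^m 2\pi\tilde k_{j,h}(u)=(q_{j,h}^{(m)})^*(u)$ with $q_{j,h}(u)=r_j(u/h)k^*(u)$, applying Plancherel, expanding by Leibniz using the compact support and boundedness of $k^*,\dots,(k^*)^{(\lceil p\rceil+1)}$, and disposing of the non-integer exponent via $2p+2\le 2\lceil p\rceil+2$. Your extra care about justifying Parseval at order $\lceil p\rceil+1$ (where $(k^*)^{(\lceil p\rceil)}$ is only Lipschitz) is a welcome refinement of a step the paper simply asserts, not a different argument.
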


Let us finish the proof of Theorem   \ref{upperbound}
  before proving
Proposition \ref{bound}. Take $h_1= \ldots= h_d=h$. The condition
(\ref{CdtRj}) on the derivatives of
$r_j$ leads to the upper bounds
$$
\E_{ (\mu \star \mu_\varepsilon) ^{\otimes n }}(W_p^p(\hat \mu_n, \mu)) \leq C\Big(h^p+ \frac{1}{\sqrt n h^{d(2\tilde \beta+1)/2}} \exp(d/(\gamma_2
h^{\beta}))\Big) \, .
$$
The choice  $h=(4d/(\gamma_2 \log (n))^{1/\beta}$ gives the desired result.

\medskip

\noindent{\bf Proof of Proposition \ref{bound}.}
It follows the proof of Proposition 2 in \cite{CaillerieEtAl2011}.
By Plancherel's identity,
\begin{eqnarray*}
\int
\frac{1}{h}(\tilde k_{j,h}(u))^2 d u = \frac{1}{2 \pi} \int
\frac{1}{h} \frac{({k^*}(u))^2}{(\mu_{\varepsilon,j}^*(u/h))^2} du & = & \frac{1}{2\pi} \int \frac{({k^*}(hu))^2}{(\mu_{\varepsilon,j}^*(u))^2} du \\
& \leq & \frac{1}{2\pi}  \int_{-1/h}^{1/h}  r_j^2(u) du\, .
\end{eqnarray*}
the last upper bound being true because $k^*$ is supported over $[-1,1]$
and bounded by 1.

Let $C$ be a positive constant, which may vary from line to line. Let $q_{j,h}(u)=r_j(u/h)k^*(u)$. Since $q_{j,h}$ is differentiable with
compactly supported derivative, we have that
$$
-i u 2 \pi \tilde k_{j,h}(u)= (q_{j,h}')^*(u) \, .
$$
Applying Plancherel's identity again,
\begin{eqnarray*}
\int
hu^2 (\tilde k_{j,h}(u))^2 d u &=&  \frac{1}{2 \pi} \int
 h(q'_{j,h}(u))^2 du \\
 &\leq C& \Big( \int_{-1/h}^{1/h} (r'_j (u))^2 du  + h^2  \int_{-1/h}^{1/h}   r_j^2(u) du \Big) \, ,
\end{eqnarray*}
the last inequality being true because $k^*$ and $(k^*)'$ are compactly
supported over $[-1,1]$.
Consequently
$$
\sqrt{\int(1 + u_j^2 h_j^2)
\frac{1}{h_j}(\tilde k_{j,h_j}(u_j))^2 d u_j}
\leq C I_j(h_j) \, .
$$

In the same way
$$
(-i u)^{\lceil p \rceil +1} 2 \pi \tilde k_{j,h}(u)= (q_{j,h}^{(\lceil p \rceil +1)})^*(u)
$$
and
$$
\int
h^{2\lceil p \rceil +1}u^{2\lceil p \rceil +2} (\tilde k_{j,h}(u))^2 d u = \frac{1}{2\pi} \int
 h^{2\lceil p \rceil +1}(q_{j,h}^{(\lceil p \rceil +1)}(u))^2 du .
$$
Now, since  $k^*, (k^*)', \ldots,  (k^*)^{(\lceil p \rceil +1)}$  are compactly supported over $[-1,1]$,
\begin{equation*}
 \int h^{2\lceil p \rceil +1}(q_{j,h}^{(\lceil p \rceil +1)}(u))^{2} du \leq C
 \sum_{k=0}^{\lceil p \rceil +1} h^{2(\lceil p \rceil +1-k)}\int_{-1/h}^{1/h} (r_j^{(k)} (u))^2 du \, .
\end{equation*}
Consequently
\begin{multline*}
\int (1+|u_\ell |^{2 p  +2} h_\ell ^{2 p  +2}) \frac{1}{h_\ell}(\tilde k_{\ell,h_\ell}(u_\ell))^2 d u_\ell
\\
\leq 2\int (1+|u_\ell|^{2 \lceil p \rceil  +2} h_\ell^{2 \lceil p \rceil  +2}) \frac{1}{h_\ell}(\tilde k_{\ell,h_\ell}(u_\ell))^2 d u_\ell
\leq C J_\ell (h_\ell) \, .
\end{multline*}
The results follows. $\square$

\subsection{The general case}\label{general}

Here, as in the introduction, we  shall assume that there exists an invertible matrix
$A$ such that the coordinates of the vector $A \varepsilon _1$ are independent.
Applying $A$ to the random variables $Y_i$ in (\ref{modelConv}),
we obtain the new model
$$
 AY_i= A X_i + A \varepsilon_i \, ,
$$
that is: a convolution model in which each error vector
$\eta_i=A \varepsilon_i$ has independent coordinates.

To estimate the image measure $\mu^A$ of $\mu$ by $A$, we use the
preliminary
estimator (\ref{fnhatbis}), that is
$$
  \hat f_{n, A}(x_1, \ldots, x_d)=
  \frac{1}{n} \sum_{i=1}^n \prod_{j=1 \ldots d} \frac 1 h_j \tilde{k}_{j,h_j}
  \Big(\frac{x_j-(AY_{i})_j}{h_j}\Big)\, ,
$$
and the estimator $\hat \mu_{n, A}$ of $\mu^{A}$ is deduced from
$\hat f_{n, A}$ as in Section \ref{DE}. This estimator $\hat \mu_{n, A}$
has the density $\hat g_{n, A}$ with respect to the Lebesgue measure.

To estimate $\mu$, we define $\hat \mu_n=\hat \mu_{n, A}^{A^{-1}}$ as the image measure
of  $\hat \mu_{n, A}$ by $A^{-1}$. This estimator has the density
$\hat g_n=  |A|\hat g_{n, A}\circ A$ with respect to the Lebesgue measure.
It can be deduced from the preliminary estimator
$
  \hat f_n=|A|\hat f_{n, A}\circ A
$
as in Section \ref{DE}. Now
\begin{eqnarray*}
  W_p^p(\hat \mu_n, \mu) &=&
  \min_{\lambda \in \Pi(\hat \mu_n, \mu)} \int \|x-y\|^p \lambda(dx,dy) \\
  &=& \min_{\pi \in \Pi(\hat \mu_{n, A}, \mu^{A})} \int \|A^{-1}(x-y)\|^p \pi(dx,dy)
 \, .
\end{eqnarray*}
Consequently, if $\|A^{-1}\|=\sup_{\|x\|=1} \|A^{-1}x\|$, we obtain that
\begin{equation}\label{comp}
W_p^p(\hat \mu_n, \mu)
  \leq \|A^{-1}\|^p W_p^p(\hat \mu_{n, A}, \mu^{A}) \, ,
\end{equation}
which is an equality if $A$ is an unitary matrix.
Note also  that $\mu \in \mathcal D_A (M,p,a)$ if and only if $\mu^A \in
\mathcal D (M,p,a)$.

Let $\mu_\eta$ be the distribution of the $\eta_i$'s. Since the coordinates of the $\eta_i$'s are independent, $\mu_\eta$ can be written as
$\mu_\eta= \mu_{\eta,1} \otimes \dots \otimes \mu_{\eta,d}$. As in Section
\ref{DE}, let $r_j :=  1 / {\mu_{\eta,j}^*}$.
Assume that the $r_j$'s satisfy the
condition (\ref{CdtRj}).
It follows from (\ref{comp})
 and Theorem  \ref{upperbound} that,
 taking $h_1 = \dots = h_d= (4d/(\gamma_2 \log (n))^{1/\beta}$, there exists a
 positive constant $C$ such that
\begin{equation*}
\sup_{\mu \in \mathcal D_A (M,p,a)}\,  \E_{ (\mu \star \mu_\varepsilon) ^{\otimes n }} \left( W_p ^p \left(\mu, \hat \mu_n \right) \right)  \leq C \left(
 \log n \right) ^{- \frac
p \beta} \, .
\end{equation*}

\subsection{Examples of rates of convergence}

\paragraph{Gaussian noise.}
Assume that we observe $Y_1, \dots, Y_n$ in the multivariate convolution model
(\ref{modelConv}), where $\varepsilon$ is a centered non degenerate Gaussian random vector.
In that case, there always exists
 an invertible matrix $A$ such that the coordinates of $A \varepsilon_1$
are independent. The distribution of
$(A \varepsilon_1)_j$ is  either a Dirac mass at zero or a  centered Gaussian random variable with positive variance.
Since $\varepsilon$ is non degenerate, there exists at
least one index $j_0$
for which $(A \varepsilon_1)_{j_0}$ is non zero.

Now, the distribution of $(A\varepsilon_1)_{j_0}$ satisfies the assumptions
of
Theorem \ref{theo:LBWp}, for any $p\geq 1$ and $\beta=2$ (Conditions (\ref{Assum:supsm+}) and (\ref{eq:pkappa12}) follow from
Lemma~\ref{lem:Cdtions}).
Moreover, denoting by $\mu_{\eta,j}$ the distribution of $\eta_{1,j} = (A \varepsilon_1)_j$, then the quantity
$r^*_j=1/\mu_{\eta,j}^*$ satisfies (\ref{CdtRj}) for any $p\geq 1$ and $\beta=2$.
Theorem \ref{ThG}  follows then from Theorems \ref{theo:LBWp} and
\ref{upperbound} (more precisely, the estimator
$\hat \mu_n$ of Theorem \ref{ThG}  is constructed as in Section \ref{general}).

\paragraph{Other supersmooth distributions.}

For $\alpha \in ]0,2[$, we denote by $s_\alpha$ the symmetric $\alpha$-stable density, whose Fourier transform $q_\alpha$ is given by
$$s_\alpha^*(x)=q_{\alpha} (x)=  \exp \left(-|x|^ {\alpha}\right)\, .
$$
Let $q_{\alpha,1}=q_\alpha$ and $q_{\alpha,2}= q_\alpha \star q_\alpha$. For any positive integer $k >2$,
define by induction
$q_{\alpha,k}=q_{\alpha,k-1} \star q_\alpha$.

\begin{lem} \label{lem:convolexp}
Let $k$ be a positive integer. 
There exists two positive constants $a_{\alpha, k}$ and
$b_{\alpha,k}$ such that for any $x \in \R$,
$$
a_{\alpha,k} \exp  \left(-|x|^ {\alpha} \right)   \leq q_{\alpha, k}(x)
 \leq b_{\alpha,k}  \exp\left(- \frac{|x|^ {\alpha}}{ 2 ^{(k-1)\alpha}}\right) .
$$
\end{lem}

The proof of Lemma~\ref{lem:convolexp} is given in Appendix \ref{ap:C}. Next, for any integer $k \geq 2$, we introduce the supersmooth density
$$
f_{\alpha,k}(x)= \frac{(s_\alpha (x))^k}{\int (s_\alpha (x))^k dx},
$$
 and we note that
$f_{\alpha, k}^*= q_{\alpha, k}/q_{\alpha,k}(0)$
and
$ f_{\alpha,k} (x)  = O(|x|^{-k(\alpha+1)})$ 
by the well known properties of $\alpha$-stable densities (see for instance Section 1.6 in \cite{CaillerieEtAl2011}).
Note that the density $ f_{\alpha,k}$ has a moment of order 
$m$ for any integer $m<-1+k (\alpha+1)$, so that $f^*_{\alpha,k}$ 
is $m$ times differentiable with bounded derivatives.
Let $r_{\alpha, k}= 1/f^*_{\alpha,k}$. It follows that
$$
   |r_{\alpha, k}^{(\ell)}(x)| \leq C_\ell \sum_{i=1}^{\ell}
   \frac{|(f^*_{\alpha,k})^{(i)}(x)|}{(f^*_{\alpha,k}(x))^{\ell+2-i}} \quad \text{ for $1\leq \ell \leq m$.}
$$
Applying Lemma
\ref{lem:convolexp}, for any $\ell \in \{0, \ldots, m\}$,
\begin{equation}\label{ralpha1}
|r_{\alpha, k}^{(\ell)}(x)| \leq K_{\alpha, \ell} \exp(|x|^\alpha) \, .
\end{equation}
 Moreover, we also have the lower bound
\begin{equation}\label{ralpha2}
       |r_{\alpha, k}(x)| \geq c_{\alpha, k}
       \exp\left( \frac{|x|^ {\alpha}}{ 2 ^{(k-1)\alpha}}\right) \, .
\end{equation}

Now, assume that we observe $Y_1, \dots, Y_n$ in the multivariate convolution model (\ref{modelConv}). Let $p\geq 1$ and $a>1$,
and  assume that there exists an
invertible matrix $A$ such that, for any $j \in \{1, \ldots, d \}$, $(A \varepsilon_1)_j$ has  the distribution
$f_{\alpha_j, k_j}$ for some $\alpha_j \in ]0,2[$ and some positive
integer $k_j$ such that 
$k_j > (2p+a+1)/(\alpha_j +1)$. This choice of $k_j$ implies that
$(A \varepsilon_1)_j$ has a moment of order $2p+a$. 
Let $\alpha= \max_{1 \leq j \leq d} \alpha_j$.

Inequality (\ref{ralpha2}) gives Condition (\ref{Assum:supsm}) in Theorem~\ref{theo:LBWp} for $\beta = \alpha$. Since
$(A \varepsilon_1)_j$ has a moment of order $2p+a$ Lemma~\ref{lem:Cdtions} can be
applied, and  Conditions (\ref{Assum:supsm+}) and (\ref{eq:pkappa12})
of Theorem~\ref{theo:LBWp} are  satisfied. 
Note that  (\ref{ralpha1}) is satisfied for any 
$\ell \in \{0, \ldots, m\}$ and any integer 
$m$ such that $m<2p+a$. Hence
Condition (\ref{CdtRj})  in Theorem~\ref{upperbound} is satisfied
for $\beta = \alpha$. Theorems \ref{theo:LBWp} and \ref{upperbound} finally give the following result:
\begin{enumerate}
\item
There exists a constant $C>0$  such that for all estimator $\tilde \mu_n$ of the
measure $\mu$:
$$
\liminf _{ n \rightarrow \infty} \  (\log n) ^{p / \alpha}   \sup_{\mu \in \mathcal D_A(M,p, a)}\,  \E_{ (\mu \star \mu_\varepsilon) ^{\otimes n }}  (
W_p^p (\tilde \mu_n, \mu) ) \geq
C .
$$
\item The estimator $\hat \mu_n$  of $\mu$ constructed in Section
\ref{general} is such that
$$
\sup_{n \geq 1} \sup_{\mu \in \mathcal D_A(M,p,a)} \ (\log n) ^{p / \alpha} \
\E_{ (\mu \star \mu_\varepsilon) ^{\otimes n }}  ( W_p^p (\hat \mu_n, \mu) ) \leq
K \, ,
$$
for some positive constant $K$.
\end{enumerate}

%
%

\paragraph{Mixtures of distributions.}

Of course, the independent coordinates of $A \varepsilon_1$ need not all be
Gaussian or even supersmooth.

For instance if there exists $j_0$ such that $(A\varepsilon_1)_{j_0}$ is a non degenerate
Gaussian random variable, and the other coordinates have distribution which is either
a Dirac mass at $0$ or a Laplace distribution, or a supersmooth distribution
$f_{\alpha, k}$ for some $\alpha \in ]0,2[$ and
$k > (2p+a+1)/(\alpha +1)$ (this list
in non exhaustive), then the
 estimator
$\hat \mu_n$ of $\mu$ constructed in Section \ref{general} is
such that
$$
\sup_{n \geq 1} \sup_{\mu \in \mathcal D_A(M,p,a)} \ (\log n) ^{p / 2} \
\E_{ (\mu \star \mu_\varepsilon) ^{\otimes n }}  ( W_p^p (\hat \mu_n, \mu) ) \leq
K \, ,
$$
and this rate is minimax.

In the same way if there exists $j_0$ such that $(A\varepsilon_1)_{j_0}$ is
supersmooth with density $f_{\alpha, k}$ for some $\alpha \in ]0,2[$ and
$k > (2p+a+1)/(\alpha +1)$, and the other coordinates have distribution which is either
a Dirac mass at $0$ or a Laplace distribution, or a supersmooth distribution
$f_{\beta, m}$ for some $\beta \in ]0, \alpha]$ and
$ m > (2p+a+1)/(\beta +1)$, then
the estimator
$\hat \mu_n$
of $\mu$ constructed in Section \ref{general} is such that
$$
\sup_{n \geq 1} \sup_{\mu \in \mathcal D_A(M,p,a)} \ (\log n) ^{p / \alpha} \
\E_{ (\mu \star \mu_\varepsilon) ^{\otimes n }}  ( W_p^p (\hat \mu_n, \mu) ) \leq
K \, ,
$$
and this rate is minimax.

\section{Discussion} \label{sec:Disc}

In the supersmooth case, we have seen that lower bounds for
the Wasserstein deconvolution problem in any dimension can be deduced from lower bounds for the deconvolution of the c.d.f in dimension one. But this method cannot work
in the ordinary smooth case for $d>1$, because, contrary to the supersmooth case, the rates of convergence depend on the dimension.

Let us briefly discuss the case where $d=1$ and the error distribution is
ordinary smooth.  It is actually well known that establishing optimal rates
of convergence in the ordinary smooth case is more difficult than in the supersmooth case, even for
pointwise estimation, as noticed by Fan in \cite{Fan91}.
 When the density is $m$ times differentiable, Fan gives in this
paper pointwise lower  and upper bounds for the  estimation of the c.d.f. in both the supersmooth case and the ordinary smooth case. He finds the
optimal rates in the supersmooth case and he conjectures that his upper bound is actually optimal in the ordinary smooth case (see his Remark 3). Optimal
pointwise rates for the deconvolution of the c.d.f. in the ordinary smooth case was an open question until recently. This problem has been solved in
\cite{DattnerEtAl2011} when the density belongs to a Sobolev class.

When $d=1$ and the error distribution is ordinary smooth, some results
about integrated rates of convergence for the density (and its derivatives) can be found in \cite{Fan93,Fan91b} but the case of the c.d.f. (for the
integrated risk) is not studied in these papers. However, some lower bounds can be easily computed by
following the method of \cite{Fan93} and using the pointwise rates of \cite{Fan91} : for  a class of ordinary smooth noise densities of order $\beta$ and assuming only that
the unknown distribution $\mu$ has a moment of order $4$, we
find that the minimax integrated risk is lower bounded by  $n^{-1/(2 \beta +1)}$ and we then obtain the same lower bound for $W_1$.
As for the pointwise estimation described in \cite{Fan91}, these rates do not match with the upper bounds given by  Proposition \ref{bound} for $W_1$. For instance, for Laplace errors ($\beta =
2$), the rate of convergence of the kernel estimator under $W_1$ is upper bounded by $n^{-1/7}$. We are currently working on this issue.

\appendix

\section{Some known lemmas}\label{A}

\medskip
\noindent The following lemma is given in \cite{FanTruong93} (Lemma~1):
\begin{lem} \label{Lemma1FanTruong93} Let $  H $ be a function such that $$ |  H(t) | \leq C ( 1+
t^2)^{-r} $$
for some $C >0 $ and some $r > 0.5$. Then there exists a positive constant $\tilde C$ such that for any sequence $b_n \rightarrow \infty $,
$$\sum _{s=1} ^{b_n}   |  H \left(b_n ( t -  s / n) \right)| \leq \tilde C (1+ t^2) ^{-r}. $$
\end{lem}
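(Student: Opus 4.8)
The statement is Lemma~1 of \cite{FanTruong93}, and the plan is to supply a short self-contained argument. Since the only property of $H$ that enters is the pointwise majorization $|H(u)|\le C(1+u^2)^{-r}$, the first step is to reduce the claim to the estimate
$$
\sum_{s=1}^{b_n}\bigl(1+(b_nt-s)^2\bigr)^{-r}\ \le\ \tilde C'\,(1+t^2)^{-r}\qquad (b_n\ge 1),
$$
where I use that in the applications (cf.\ (\ref{eq:ftheta}), in which $t_{s,n}=(s-1)/b_n$) the translation nodes form a grid of mesh $1/b_n$, so that after the rescaling $x:=b_nt$ the arguments of $H$ sit at consecutive integers $x-s$. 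From here the plan is a dichotomy on the size of $|t|$.

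For the bounded regime, say $|t|\le 2$, I would discard the constraint $s\le b_n$ and dominate the left-hand side by $S(x):=\sum_{k\in\Z}(1+(x-k)^2)^{-r}$; since $r>1/2$ this series converges, and being $1$-periodic in $x$ it is bounded by a constant $\Sigma_r$ depending only on $r$ (its tail is controlled by $\sum_{k\ge 1}k^{-2r}$). As $(1+t^2)^{-r}\ge 5^{-r}$ on $\{|t|\le 2\}$, this settles the inequality on that range. For the tail regime $|t|>2$, I would instead exploit that every summand is then small: since $s\le b_n$ and $|x|=b_n|t|>2b_n$, the reverse triangle inequality gives $|x-s|\ge |x|-b_n\ge |x|/2$, so each of the (at most $b_n$) terms is bounded by $(1+b_n^2t^2/4)^{-r}\le 4^r b_n^{-2r}|t|^{-2r}$; summing yields $4^r b_n^{\,1-2r}|t|^{-2r}$, which is $\le 4^r|t|^{-2r}\le 8^r(1+t^2)^{-r}$ using $b_n\ge 1$, $1-2r<0$ and $|t|\ge 1$. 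Combining the two regimes and reinstating the constant $C$ of the hypothesis on $H$ then gives the lemma with a $\tilde C$ depending only on $C$ and $r$.

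The delicate point --- the one step I would be careful about --- is the last estimate in the tail regime: the naive count produces a factor $b_n$ from the number of terms, and this is compensated only by the factor $b_n^{-2r}$ generated by the rescaling $x=b_nt$. This cancellation is precisely why $r>1/2$ (and not merely $r>0$) is assumed, and it is also the reason the mesh of the translation grid must shrink like $1/b_n$: with a finer mesh (for instance $1/n$, so that $b_n/n\to 0$) the rescaled nodes would cluster near $b_nt$ rather than spread over distinct integers, the $b_n$ near-equal terms around $t=0$ would cease to be summable, and the bound would fail. Apart from this point the argument is entirely elementary.
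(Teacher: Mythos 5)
Your argument is correct, and there is nothing in the paper to compare it with: the paper states this as a known result, citing Lemma~1 of Fan and Truong (1993) without proof, so your two-regime argument (periodic-sum bound for $|t|\le 2$, and the $b_n\cdot b_n^{-2r}$ cancellation for $|t|>2$, both hinging on $r>1/2$) is exactly the kind of elementary proof the reference supplies. I checked the details: the bound $S(x)=\sum_{k\in\Z}(1+(x-k)^2)^{-r}\le \Sigma_r$ is legitimate since $2r>1$, and in the tail regime the reverse triangle inequality $|b_nt-s|\ge b_n|t|-b_n\ge b_n|t|/2$ together with at most $b_n$ summands and $b_n^{1-2r}\le 1$ gives the claim with a constant depending only on $C$ and $r$. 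You were also right to flag the grid: as printed, the lemma has nodes $s/n$, and with that mesh the statement is false (at $t=0$, with $b_n^2/n\to 0$, all $b_n$ terms are close to $|H(0)|$, so the sum diverges while the right-hand side stays bounded); the intended nodes are spaced $1/b_n$ apart, i.e. $s/b_n$ (or $t_{s,n}=(s-1)/b_n$ as in (\ref{eq:ftheta})), which is what both Fan--Truong and the application in Section~\ref{eic} use, and your proof addresses that corrected statement.
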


\noindent Let $f_{0,r} $ be the function defined in (\ref{eq:f0}). The following lemma can be found in \cite{Fan91} (Lemma~5.1):
\begin{lem} \label{Lemma51Fan91} For any probability measure $\mu$, there exists a constant $C_r >0$ such that
$$f_{0,r} \star \mu (t) \geq C_r t^{-2r} \quad  \textrm{as $|t|$ tends to infinity}.$$
\end{lem}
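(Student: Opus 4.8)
The plan is to use only the fact that $\mu$ is a probability measure --- in particular that it cannot escape to infinity: since $\mu(\R)=1$, there exists $K=K(\mu)>0$ with $\mu([-K,K])\geq \tfrac12$. The key observation is that on the bounded window $[-K,K]$ the argument $t-u$ of the heavy‑tailed kernel $f_{0,r}$ is comparable to $t$ once $|t|$ is large, so the polynomial lower decay of $f_{0,r}$ is inherited by the convolution $f_{0,r}\star\mu$.

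First I would write $f_{0,r}\star\mu(t)=C_r\int_{\R}(1+(t-u)^2)^{-r}\,\mu(du)$ and, since the integrand is nonnegative, keep only the contribution of $[-K,K]$:
\[
 f_{0,r}\star\mu(t)\ \geq\ C_r\int_{-K}^{K}(1+(t-u)^2)^{-r}\,\mu(du).
\]
Then, for $|t|\geq 2K$ and $u\in[-K,K]$ one has $|t-u|\leq |t|+K\leq \tfrac32|t|$, hence $1+(t-u)^2\leq 1+\tfrac94 t^2\leq c\,t^2$ for $|t|$ large, with $c$ an absolute constant; consequently $(1+(t-u)^2)^{-r}\geq c^{-r}|t|^{-2r}$ uniformly in $u\in[-K,K]$. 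Substituting this bound into the previous display yields
\[
 f_{0,r}\star\mu(t)\ \geq\ C_r\,c^{-r}\,|t|^{-2r}\,\mu([-K,K])\ \geq\ \tfrac12\,C_r\,c^{-r}\,|t|^{-2r}
\]
for all $|t|$ sufficiently large, which is the announced estimate (with constant $\tfrac12 C_r c^{-r}$, still denoted $C_r$ as in the statement).

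There is no real obstacle here; the only point worth noting is that the constant produced depends on $\mu$ through the choice of $K$, whereas the order $|t|^{-2r}$ of the lower bound does not. If a constant uniform over a family of measures were ever needed, one would instead pick a single $K$ with $\inf_\mu \mu([-K,K])\geq\tfrac12$, which is possible precisely when the family is tight; this refinement is not required for the uses of the lemma in the paper.
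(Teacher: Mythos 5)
Your proof is correct. The paper itself does not prove this lemma --- it is stated in Appendix \ref{A} as a known result, quoted from Lemma 5.1 of Fan (1991) --- but your argument (restrict the integral to a compact set $[-K,K]$ carrying at least half the mass of $\mu$, then use that $1+(t-u)^2\leq c\,t^2$ uniformly for $u\in[-K,K]$ once $|t|\geq 2K$) is precisely the standard proof of that result, and your closing remark correctly identifies that the constant, but not the rate, depends on $\mu$; this is harmless here since the lemma is only applied with $\mu$ equal to the fixed noise density $g$.
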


\noindent The following lemma is rewritten from \cite{Fan91} (Lemma~5.2):
\begin{lem} \label{Lemma52Fan91}
Let $r >0$. Suppose that $P( | \varepsilon_1 ' - t | \leq  |t|^{\kappa_1} ) = O ( |t| ^{-\kappa_2})$ as  $|t|$ tends to infinity for some $0 <
\kappa_1 < 1$ and $\kappa_2> 1$. Let $H$ be a bounded function such that $ | H(t) | \leq O ( |t| ^{-2r})$ for some $r >  \kappa_2 / (2 \kappa_1).$ Then there exists a large  $T$ and a constant $C$ such that when $|v| / b_n \geq T$ :
$$   \int_{-\infty} ^{+\infty}  H (v-y) g(y/b_n)\, d y  / b_n   \leq C (|v|/b_n) ^{-\kappa_2}  . $$
\end{lem}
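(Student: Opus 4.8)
The plan is to reduce the integral to an expectation by a change of variables and then split according to whether the noise variable is close to or far from the point $v/b_n$. Precisely, set $w := v/b_n$ and substitute $y = b_n z$; this gives
$$
\frac{1}{b_n}\int_{\R} H(v-y)\,g(y/b_n)\,dy \;=\; \int_{\R} H\bigl(b_n(w-z)\bigr)\,g(z)\,dz \;=\; \E\Bigl[\,H\bigl(b_n(w-\varepsilon_1')\bigr)\Bigr],
$$
where $\varepsilon_1'$ denotes a random variable with density $g$. It then suffices to exhibit $T\ge 1$ and a constant $C$ — depending only on $\kappa_1,\kappa_2,r$, on $\|H\|_\infty$, and on the two $O(\cdot)$-constants in the hypotheses — such that this quantity is at most $C\,|w|^{-\kappa_2}$ in absolute value whenever $|w|\ge T$. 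I would split the integral over $\R$ into the part over $B_w := \{z:\ |z-w|\le |w|^{\kappa_1}\}$ and the part over its complement; the cutoff radius $|w|^{\kappa_1}$ is chosen precisely to coincide with the radius appearing in the small-ball assumption.

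On $B_w$ I would use nothing more than the boundedness of $H$: the corresponding piece is, in absolute value, at most
$$
\|H\|_\infty \int_{B_w} g(z)\,dz \;=\; \|H\|_\infty\, P\bigl(|\varepsilon_1'-w|\le |w|^{\kappa_1}\bigr),
$$
which by the assumption $P(|\varepsilon_1'-t|\le|t|^{\kappa_1}) = O(|t|^{-\kappa_2})$ is $O(|w|^{-\kappa_2})$ once $|w|$ exceeds some threshold $T_0$; this accounts for part of the lower bound required on $T$.

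On the complement of $B_w$ we have $|b_n(w-z)| \ge b_n |w|^{\kappa_1} \ge |w|^{\kappa_1}$ since $b_n\ge 1$, so for $T$ large enough the polynomial tail bound $|H(t)|\le c\,|t|^{-2r}$ applies with $t=b_n(w-z)$, giving
$$
\bigl|H\bigl(b_n(w-z)\bigr)\bigr| \;\le\; c\,b_n^{-2r}\,|w-z|^{-2r} \;\le\; c\,b_n^{-2r}\,|w|^{-2r\kappa_1} \;\le\; c\,|w|^{-2r\kappa_1}
$$
for every such $z$; since $g$ integrates to $1$, the corresponding piece of the integral is at most $c\,|w|^{-2r\kappa_1}$. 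The hypothesis $r>\kappa_2/(2\kappa_1)$, i.e. $2r\kappa_1 > \kappa_2$, is exactly what yields $|w|^{-2r\kappa_1}\le |w|^{-\kappa_2}$ for $|w|\ge 1$. Adding the two estimates gives the desired bound $C\,|w|^{-\kappa_2} = C\,(|v|/b_n)^{-\kappa_2}$.

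The argument is elementary; the only points that need care are (i) taking the splitting radius to be exactly $|w|^{\kappa_1}$, so that the near part is controlled using nothing but the boundedness of $H$ together with the small-ball hypothesis, and (ii) the exponent bookkeeping — that $2r\kappa_1 > \kappa_2$ keeps the tail part no larger than the near part — together with the harmless observation that $b_n\ge 1$ lets one discard the factor $b_n^{-2r}$.
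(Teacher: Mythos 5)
Your proof is correct. Note that the paper itself gives no proof of this lemma: it is quoted verbatim in Appendix A as Lemma~5.2 of Fan (1991), so there is nothing internal to compare against; your argument (change of variables to $\E[H(b_n(w-\varepsilon_1'))]$ with $w=v/b_n$, then splitting at the radius $|w|^{\kappa_1}$ so that the near part is controlled by boundedness of $H$ plus the small-ball hypothesis and the far part by the tail bound $|H(t)|\leq c|t|^{-2r}$ together with $2r\kappa_1>\kappa_2$ and $b_n\geq 1$) is exactly the standard one from Fan's paper, and the bookkeeping is sound.
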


\section{Distances between probability measures}\label{B}

The first lemma follows straightforwardly from the definition of $W_1$.

\begin{lem} \label{MinorW1}
Let $\mu$ and $\tilde \mu$ be two measures on $\R^d$ with finite first moments, and let ${\mu}_1$ and ${\tilde \mu}_1$ be their first marginals. Then
$
W_1(\mu, \tilde \mu) \geq W_1\left( \mu_1 , \tilde \mu _1 \right)
$.
\end{lem}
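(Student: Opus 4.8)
The plan is to exhibit, for any coupling of the $d$-dimensional measures $\mu$ and $\tilde\mu$, an induced coupling of the one-dimensional marginals $\mu_1$ and $\tilde\mu_1$ that can only decrease the transport cost. First I would fix an arbitrary $\pi \in \Pi(\mu,\tilde\mu)$, a probability measure on $\R^d\times\R^d$ with marginals $\mu$ and $\tilde\mu$. Let $P_1\colon \R^d\to\R$ denote projection onto the first coordinate, and consider the push-forward $\pi_1 := (P_1\otimes P_1)_{\#}\pi$, i.e.\ the image of $\pi$ under the map $(x,y)\mapsto (x_1,y_1)$ from $\R^d\times\R^d$ to $\R\times\R$. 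The first step is to check that $\pi_1 \in \Pi(\mu_1,\tilde\mu_1)$: its first marginal is the image of $\mu$ under $P_1$, which is exactly $\mu_1$ by definition of the first marginal, and symmetrically for the second marginal; both are well defined and have finite first moments by hypothesis.

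The second step is the cost comparison. For the Euclidean norm on $\R^d$ one has $\|x-y\| \geq |x_1 - y_1|$ for all $x,y\in\R^d$, simply because $|x_1-y_1|^2$ is one nonnegative summand in $\|x-y\|^2 = \sum_{j=1}^d |x_j-y_j|^2$. Hence
$$
\int_{\R^d\times\R^d} \|x-y\|\,\pi(dx,dy) \;\geq\; \int_{\R^d\times\R^d} |x_1-y_1|\,\pi(dx,dy) \;=\; \int_{\R\times\R} |s-t|\,\pi_1(ds,dt) \;\geq\; W_1(\mu_1,\tilde\mu_1),
$$
where the middle equality is the change-of-variables (transfer) formula for the push-forward $\pi_1$, and the final inequality holds because $\pi_1$ is an admissible coupling of $\mu_1$ and $\tilde\mu_1$ so its cost is at least the infimum defining $W_1(\mu_1,\tilde\mu_1)$. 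Taking the infimum over $\pi \in \Pi(\mu,\tilde\mu)$ on the left-hand side yields $W_1(\mu,\tilde\mu)\geq W_1(\mu_1,\tilde\mu_1)$, which is the claim.

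There is no real obstacle here: the only points requiring a word of care are that the infimum defining $W_1$ is attained (or, to avoid invoking that, that the above chain works verbatim with the infimum on the outside), and the measurability of the projection maps, which is immediate since they are continuous. The finiteness of first moments assumed in the statement guarantees all the integrals above are finite and that $W_1(\mu,\tilde\mu)$ and $W_1(\mu_1,\tilde\mu_1)$ are themselves finite, so the inequality is meaningful.
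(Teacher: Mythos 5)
Your argument is correct and is precisely the "straightforward from the definition" argument the paper has in mind (the paper gives no details): push a coupling of $\mu,\tilde\mu$ forward under the coordinate projections, observe $\|x-y\|\geq |x_1-y_1|$, and take the infimum over couplings. Nothing further is needed.
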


\noindent The following Lemma is a particular case of the famous Le Cam's inequalities. See for instance Section 2.4 in~\cite{Tsybakov09} for more details.
\begin{lem} \label{lem:LeCam}
Let  $h$ and $\tilde h$ be two densities  on $\R^n$, then
$$ \int_{\R^n}  \min \left( h(x) , \tilde h(x) \right)  d  x \geq  \frac 1 2  \left\{ \int_{\R^n}  \sqrt{ h(x)  \tilde h(x) }  d  x  \right\}^2  . $$
 \end{lem}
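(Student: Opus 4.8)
The plan is to reduce the statement to a single application of the Cauchy--Schwarz inequality, using the elementary pointwise identity $ab = \min(a,b)\cdot\max(a,b)$ valid for $a,b\geq 0$.

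First I would write, for almost every $x\in\R^n$,
$$\sqrt{h(x)\,\tilde h(x)} = \sqrt{\min(h(x),\tilde h(x))}\cdot\sqrt{\max(h(x),\tilde h(x))},$$
integrate both sides over $\R^n$, and apply Cauchy--Schwarz to the product of the two nonnegative measurable factors. This yields
$$\int_{\R^n}\sqrt{h(x)\,\tilde h(x)}\,dx \;\leq\; \left(\int_{\R^n}\min(h(x),\tilde h(x))\,dx\right)^{1/2}\left(\int_{\R^n}\max(h(x),\tilde h(x))\,dx\right)^{1/2}.$$
Next I would control the last factor by exploiting that $h$ and $\tilde h$ are probability densities: from $\min(a,b)+\max(a,b)=a+b$ we obtain
$$\int_{\R^n}\max(h,\tilde h)\,dx = \int_{\R^n}(h+\tilde h)\,dx - \int_{\R^n}\min(h,\tilde h)\,dx = 2 - \int_{\R^n}\min(h,\tilde h)\,dx \leq 2.$$
Substituting this bound into the previous display and squaring gives $\bigl(\int_{\R^n}\sqrt{h\,\tilde h}\bigr)^2 \leq 2\int_{\R^n}\min(h,\tilde h)$, which is exactly the asserted inequality.

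There is essentially no obstacle here: the only points requiring a word of care are the measurability of $x\mapsto\min(h(x),\tilde h(x))$ and $x\mapsto\max(h(x),\tilde h(x))$ (immediate, being pointwise minima and maxima of measurable functions) and the finiteness of all integrals involved, which holds since $0\leq\min(h,\tilde h)\leq\max(h,\tilde h)\leq h+\tilde h$ and $h+\tilde h$ is integrable, so that Cauchy--Schwarz applies legitimately. An equivalent route would be to note $\int_{\R^n}\min(h,\tilde h)=1-\tfrac12\int_{\R^n}|h-\tilde h|$ and relate $\int_{\R^n}\sqrt{h\,\tilde h}$ to the Hellinger affinity, but the Cauchy--Schwarz argument above is the shortest.
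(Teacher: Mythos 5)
Your proof is correct and is exactly the standard argument (the pointwise factorization $\sqrt{h\tilde h}=\sqrt{\min(h,\tilde h)}\sqrt{\max(h,\tilde h)}$, Cauchy--Schwarz, and $\int\max(h,\tilde h)\le 2$); the paper itself gives no proof but refers to Section 2.4 of Tsybakov, where this same argument appears. Nothing to add.
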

\noindent  The next lemma can  be found for instance in Section~2.4 of \cite{Tsybakov09}.

\begin{lem}  \label{lem:lbchi2}
Let $h$ and $\tilde h$ be two densities for the Lebesgue measure on $\R$, then
$$  \int _\R \sqrt{ h(y) \, \tilde h(y) } \, dy  \geq  1 - \frac 1 2 \chi ^2 (  h  , \tilde h) .$$
\end{lem}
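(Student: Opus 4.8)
The plan is to recognise the left-hand side as one minus half a squared Hellinger distance and then to use the elementary domination of that Hellinger distance by the $\chi^2$ divergence. First I would note that, since $h$ and $\tilde h$ are probability densities, $\int_\R h = \int_\R \tilde h = 1$, so expanding the square $(\sqrt h - \sqrt{\tilde h})^2 = h + \tilde h - 2\sqrt{h\tilde h}$ and integrating gives
$$\int_\R \sqrt{h(y)\,\tilde h(y)}\,dy = 1 - \frac12 \int_\R \Big(\sqrt{h(y)} - \sqrt{\tilde h(y)}\Big)^2\,dy .$$
It therefore suffices to prove the integrated inequality $\int_\R (\sqrt h - \sqrt{\tilde h})^2 \le \chi^2(h,\tilde h)$, where $\chi^2(h,\tilde h) = \int_\R (h-\tilde h)^2/h$ is the divergence defined earlier.

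The key step is a pointwise bound on the integrand. Wherever $h(y) > 0$ we have the algebraic identity
$$\Big(\sqrt{h(y)} - \sqrt{\tilde h(y)}\Big)^2 = \frac{\big(h(y) - \tilde h(y)\big)^2}{\big(\sqrt{h(y)} + \sqrt{\tilde h(y)}\big)^2},$$
and since $\big(\sqrt{h(y)} + \sqrt{\tilde h(y)}\big)^2 = h(y) + 2\sqrt{h(y)\tilde h(y)} + \tilde h(y) \ge h(y)$, the denominator is at least $h(y)$, hence
$$\Big(\sqrt{h(y)} - \sqrt{\tilde h(y)}\Big)^2 \le \frac{\big(h(y) - \tilde h(y)\big)^2}{h(y)} .$$
On the set $\{h = 0\}$ this inequality still holds with the usual conventions: if $\tilde h(y) > 0$ there, the right-hand side is $+\infty$ and in fact $\chi^2(h,\tilde h) = +\infty$, so the lemma is trivial; if $\tilde h(y) = 0$ as well, both sides vanish.

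Integrating the pointwise bound over $\R$ gives $\int_\R (\sqrt h - \sqrt{\tilde h})^2\,dy \le \chi^2(h,\tilde h)$, and substituting into the first display yields $\int_\R \sqrt{h\,\tilde h}\,dy \ge 1 - \frac12 \chi^2(h,\tilde h)$, which is the claim. There is no genuine obstacle here: the argument is just the classical comparison between the squared Hellinger distance and the $\chi^2$ divergence (see e.g. Section~2.4 of \cite{Tsybakov09}), and the only point requiring a little care is the behaviour on $\{h = 0\}$, handled as above.
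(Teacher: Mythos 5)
Your proof is correct: the identity $\int\sqrt{h\tilde h}=1-\frac12\int(\sqrt h-\sqrt{\tilde h})^2$ together with the pointwise bound $(\sqrt h-\sqrt{\tilde h})^2\le (h-\tilde h)^2/h$ gives the claim, and the edge case $\{h=0\}$ is handled properly. The paper offers no proof of its own (it simply cites Section~2.4 of Tsybakov), and your argument is exactly the standard Hellinger--$\chi^2$ comparison found there, so nothing further is needed.
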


\section{Auxiliary results} \label{ap:C}

\paragraph{Proof of Lemma~\ref{lem:convolexp}.}
It suffices to prove Lemma~\ref{lem:convolexp} for $k=2$, and the general case follows by induction.
Since $q_\alpha \star  q_\alpha$ is symmetric, it suffices to prove the result for $x>0$.
Now, for any $x >0 $,
\begin{eqnarray*}
 q_{\alpha} \star  q_{\alpha}(x) &=& 2 \int_{x/ 2} ^{+\infty} \exp \left(-|x-t|^ {\alpha}-
 t^ {\alpha} \right) d t \\
&\leq& a_{\alpha, 2} \exp \left(-(x/2)^ {\alpha}  \right).
\end{eqnarray*}
On the other hand,  for any $x >1$, there exist a positive constant $c_\alpha$ such that
\begin{eqnarray}
 q_{\alpha} \star  q_{\alpha}(x) &\geq &   \int_{x/ 2} ^{x} \exp \left(-|x-t|^ {\alpha} -t^ {\alpha} \right) d t \notag \\
&\geq & \exp \left(-x^ {\alpha}  \right) \int_{0} ^{x/ 2} \exp \left( - u^ {\alpha} \right) d u  \notag \\
&\geq& c_{ \alpha} \exp \left(-x^{\alpha}  \right)  \label{ref:CA1} .
\end{eqnarray}
The function $x \mapsto  q_{\alpha} \star  q_{\alpha}(x) \exp \left(x^ {\alpha}
\right)$ is continuous and positive on $[0,1]$ and thus (\ref{ref:CA1}) is also true on  $[0,1]$ for some other positive  constant $c'_{\alpha}$.
 The lower bound follows by taking
$b_{\alpha,2}= \min \{ c_{ \alpha}, c'_{\alpha} \}$.


\bibliographystyle{alpha}
\bibliography{BornesInfDeconvW}

\end{document}